\newcommand{\blue}{\normalcolor}
\newcommand{\bluetext}[1]{#1}
\newcommand{\nc}{\normalcolor}
\newcommand{\ijeffx}[1]{}
\newcommand{\lgg}{\underline{g}}
\crefname{hypothesis}{Hypothesis}{Hypotheses}
\crefname{fact}{Fact}{Facts}
\title{Energy Approach from $\varepsilon$-Graph to Continuum Diffusion Model with Connectivity Functional\thanks{Submitted to the editors DATE.
\funding{S.L. and W.H. was supported by National Institute of General
Medical Sciences through grant 1R35GM146894. J.C. was supported by NSF-DMS:2436333, as well as an Albert and Dorothy Marden Professorship.}}}
\author{Yahong Yang\thanks{School of Mathematics, Georgia Institute of Technology, Atlanta, GA
(\email{yyang3194@gatech.edu}).}\and Sun Lee\thanks{Department of Mathematics, The Pennsylvania State University, State College,
PA
  (\email{wxh64@psu.edu}, \email{skl5876@psu.edu}).}\and Jeff Calder\thanks{School of Mathematics, 
University of Minnesota, Minneapolis, MN
  (\email{jwcalder@umn.edu}).}
\and Wenrui Hao\footnotemark[3] }
\begin{document}

\maketitle

\begin{abstract}
We derive an energy--based continuum limit for $\varepsilon$-graphs endowed
with a general connectivity functional.  We prove that
the discrete energy and its continuum counterpart differ by at most
$\mathcal{O}(\varepsilon)$; the prefactor involves only the
$W^{1,1}$-norm of the connectivity density as $\varepsilon\to0$, so the
error bound remains valid even when that density has strong local
fluctuations.  As an application, we introduce a neural--network procedure
that reconstructs the connectivity density from edge--weight data and then
embed the resulting continuum model into a brain-dynamics framework.  In
this setting the usual constant diffusion coefficient is replaced by the
spatially varying coefficient produced by the learned density, yielding
dynamics that differ significantly from those obtained with conventional
constant-diffusion models.
\end{abstract}

\begin{keywords}
$\varepsilon$-graph, connectivity functional, energy approach, brain dynamics
\end{keywords}

\begin{MSCcodes}
35R02, 49J45, 92C20
\end{MSCcodes}

\section{Introduction}

The human brain is an extraordinarily complex system whose function emerges from the dynamic interactions of a vast network of neurons. To study this complexity at a macroscopic level, brain activity is often represented through {\it parcellation} of neuroimaging data such as PET and fMRI, where the brain is divided into distinct regions of interest (ROIs). Each ROI is then treated as a node in a graph, and edges represent either structural connectivity (e.g., diffusion MRI tractography) or functional connectivity (e.g., correlations of fMRI time series) \cite{sporns2011human,craddock2012whole,schaefer2018local}.  
The topology of this connectome is highly organized, supporting efficient information transfer, a balance between functional segregation and integration, and robustness to perturbations. A powerful mathematical framework for analyzing such systems is the {\it geometric graph}, where nodes are embedded in a spatial domain (the brain volume or cortical surface) and edges are weighted according to measures of proximity or connectivity.  

A central challenge in computational neuroscience is to bridge discrete graph-based models of brain connectivity with continuum partial differential equation (PDE) models that capture large-scale spatiotemporal dynamics of neural activity. This challenge is particularly relevant in the context of neurodegenerative diseases, where PDE-based diffusion models have been employed to describe the propagation of pathological proteins such as amyloid-beta and tau in Alzheimer’s disease \cite{raj2012network,hao2016mathematical,hao2025optimal,zheng2022data,hao2022optimal}.  

In this work, we develop a rigorous energy-based framework to derive a continuum diffusion model directly from an $\varepsilon$-graph representation of the brain network, where edge weights are defined through a connectivity functional $g$. 
The associated distance metric is given by the shortest path between points, with path length weighted by $g(\vx)$, encoding the local ``cost'' or ``resistance'' to neural communication or molecular transport at location $\vx$.  
The central contribution of this paper is to show that the nonlocal Dirichlet energy defined on such a graph converges to a local continuum energy, thus providing a principled link between network-based and PDE-based descriptions of brain dynamics.

There is already a substantial body of work on the convergence from discrete models to continuum models, such as the convergence of the Cauchy–Born rule and the Peierls–Nabarro model in materials science \cite{ming2007cauchy,luskin2013atomistic,liu2007ab,makridakis2011priori,luo2018atomistic,yang2023stochastic,yang2022convergence}, as well as on convergence from graph-based structures to continuum models \cite{calder2018game,calder2020calculus,calder2022improved,calder2022lipschitz,slepcev2019analysis,garcia2020error,bungert2024convergence,trillos2025minimax}. The former concerns analysis in crystalline structures, where the atomistic model is defined on the lattice $\varepsilon \mathbb{Z}^d$. In contrast, the graph-based models are typically defined on random discrete spaces, and have typically arisen in the analysis of machine learning and data science algorithms in the large data limit. This includes works on continuum limits of semi-supervised learning based on the $p$-Laplacian \cite{calder2018game,slepcev2019analysis} and graph Poisson equations \cite{bungert2024convergence}, as well as continuum limits for the spectrum of the graph Laplacian (i.e., spectral convergence, see e.g., \cite{calder2022improved,calder2022lipschitz,garcia2020error,trillos2025minimax} and references therein), among many others. In this paper, we follow the latter framework of graph-based models.

Our framework for convergence from graph-based discrete energies to continuum local energies is based on the variational arguments outlined in several works \cite{calder2020calculus,garcia2020error,bungert2024convergence}, with modifications to account for a general connectivity functional on the graph. The proof can be divided into two steps by introducing an intermediate continuous nonlocal energy. The convergence from the discrete energy to the continuous nonlocal energy with connectivity density $g$ (Theorem~\ref{main1}) can be obtained via a Bernstein inequality for U-statistics, provided that $g$ admits a positive lower bound. For the convergence from the continuous nonlocal energy to the local energy, we need a result on locality of optimal paths, which is established in Lemma~\ref{two bound} and relies on the smoothness of the domain. In particular, when $\vx$ and $\vy$ are close, the optimal path between them remains concentrated near $\vx$ (Proposition~\ref{small path}); this is the key ingredient in proving the convergence from the continuous nonlocal energy to the local energy (Theorem~\ref{main2}). The error introduced by reducing the optimal path from $\vx$ to $\vy$ depends on the maximal derivative of the connectivity density $g$ around $\vx$. After averaging over the entire domain, the prefactor in the final error estimate depends on the $W^{1,1}$-norm of $g$, rather than its $W^{1,\infty}$-norm.

Based on the continuum energy models, one can derive the corresponding diffusion equations and perform numerical simulations. A main difficulty is that these equations involve the connectivity density function~$g$, whereas in most problems we only have access to the connectivity functional matrix (i.e., the edge weights of a graph) \cite{shahhosseini2022functional}. Thus, the first step is to recover the density function~$g$ from the data of edge weights. To this end, we approximate $g$ by a neural network~$g_{\vtheta}$. Specifically, given a candidate function~$g_{\vtheta}$, we compute the induced edge weights in the graph. Although evaluating these weights exactly requires solving an optimization problem for the optimal paths, our theoretical results show that the optimal path is localized near the endpoints~$\vx,\vy$. This allows us to apply a linear approximation of the path to estimate the edge weights. We then train the neural network using these estimated weights as data, thereby obtaining an approximation of~$g$. Our analysis verifies that this method is effective and achieves a linear approximation rate in sensitive and practical experiments, provided that the training error remains controlled away from boundary effects.

Applying the derived continuum model in simulations yields diffusion equations with spatially varying coefficients. In this setting, the standard constant diffusion coefficient, which is commonly assumed in practice \cite{murray2007mathematical,crank1979mathematics,osada2006homogenization}, is replaced by a spatially varying coefficient determined by the learned density. As a result, the dynamics differ substantially from those produced by constant-coefficient diffusion models.

\section{Weighted Graph Diffusion}
\subsection{\texorpdfstring{$\varepsilon$-Graph with Connectivity Density $g$}{epsilon-Graph with Connectivity Density g}}

Let {\blue \(\Omega\) be a closed and bounded domain}, and \(\vx_1, \dots, \vx_n\) be an i.i.d. sequence of random variables drawn from a probability density \(\rho:\Omega \to \mathbb{R}\) that is positive, bounded, and Lipschitz continuous.  We denote the set of graph vertices by
\[
\fX_n = \{\vx_1, \dots, \vx_n\}.
\]
We consider a random geometric graph whose edge weights are defined by
\[
w_{ij} = \eta\Bigl( \frac{d_g(\vx_i, \vx_j)}{\varepsilon}\Bigr),
\]
where \(\eta:[0,\infty)\to [0,\infty)\) is a positive, continuous, and decreasing function on \([0,1]\) that vanishes on \([1,\infty)\).

The metric \(d_g\) is defined as follows.

\begin{definition}\label{def:weighted_distance}
The weighted distance \(d_g:\Omega\times\Omega\to \mathbb{R}\) is given by
\begin{equation}\label{eq:weighted_distance}
d_g(\vx,\vy) = \inf_{\substack{\gamma(0)=\vx, \\ \gamma(1)=\vy, \\ \gamma \in C^1([0,1];\Omega)}}
\int_0^1 |\gamma'(t)|\, g\bigl(\gamma(t)\bigr)\,\D  t.
\end{equation}
That is, \(d_g(\vx,\vy)\) is the length of the shortest path from \(\vx\) to \(\vy\) in \(\Omega\), where the length is weighted by the connectivity density \(g\).
\end{definition}

The first variation of the functional leads to the Euler-Lagrange equation:
\begin{equation}
\frac{\mathrm{d}}{\mathrm{~d} t}\left(g(\gamma(t)) \frac{\gamma^{\prime}(t)}{\left|\gamma^{\prime}(t)\right|}\right)- \nabla g(\gamma(t))\left|\gamma^{\prime}(t)\right|=0 .\notag
\end{equation}
In practice, one can solve the associated Euler–Lagrange equation to find the optimal path between any two points $\vx$ and $\vy$. \blue Alternatively, 
one can solve the associated eikonal equation $|\nabla u| = g$ with boundary condition $u(\vx)=0$, and appropriate state constrained condition on $\partial \Omega$, whose solution is $u(\vy) = d_g(\vx,\vy)$. Optimal paths can then be computed by dynamic programming, for more details we refer to \cite{bardi1997optimal}.  Either way, computing the metric \(d_{g}(\vx,\vy)\) for every pair of nodes is prohibitive when \(n\) is large.  In this paper, we derive a continuum formulation whose cost is independent of \(n\); the resulting model depends only on the connectivity field \(g(\vx)\) and requires no pairwise distance evaluations. \nc




Given this metric, the corresponding Dirichlet energy of a function \(u:\fX_n\to\mathbb{R}\) is defined on the graph by
\begin{equation}\label{eq:dirichlet_energy}
E_n[u] = \frac{1}{\sigma_\eta \, n^2 \varepsilon^{d+2}} \sum_{i,j=1}^n w_{ij}\,\bigl(u(\vx_i)-u(\vx_j)\bigr)^2,
\end{equation}
where 
\[
\sigma_\eta = \int_{\mathbb{R}^d} \eta(|\vw|)\,|w_1|^2\,\D  \vw
\]
is a normalization constant related to \(\eta\). Similarly, the corresponding normalized graph Laplacian is given by
\begin{equation}\label{eq:graph_laplacian}
L_n[u](\vx_i) = \frac{2}{\sigma_\eta \, n \varepsilon^{d+2}} \sum_{j=1}^n w_{ij}\,\bigl(u(\vx_i)-u(\vx_j)\bigr).
\end{equation}
{\blue This graph-based description is widely used to simulate brain activity, as noted in the introduction.
In the present work, we start from such a graph representation and pass to a continuum limit, obtaining a PDE model for large-scale brain dynamics.
The resulting continuum formulation is both more faithful to physiological connectivity and more amenable to analysis than the discrete biological models commonly employed.}

\subsection{Preliminaries}\label{prelim}

In this subsection, we collect the assumption and proposition regarding the weighted distance function that will be used throughout the paper. The proof of Proposition \ref{small path} can be found in the supplementary materials.

\begin{assumption}\label{assump:connectivity_functional}
{\blue For the connectivity density function \(g:{\Omega}\subset\sR^d\to\mathbb{R}\)}, we assume that 
\[
{\blue \bar{g} := \sup_{\vy\in {\Omega}} g(\vy)<\infty
\quad \text{and} \quad
0<\underline{g} := \inf_{\vy\in {\Omega}} g(\vy) .}
\]\end{assumption}



\ijeffx{The proposition below is not true (in particular, the upper bound), since the paths are constrained to travel inside $\Omega$. It would hold if $B_r(\vx)\subset \Omega$ or $B_r(\vy)\subset \Omega$ for $r=|\vx-\vy|$. Otherwise, the result should be 
\begin{equation}\label{eq:distance_bounds0new}
\underline{g}|\vx-\vy| \le d_g(\vx,\vy) \le \bar{g}\,d_\Omega(\vx,\vy),
\end{equation}
where $d_\Omega(\vx,\vy) := d_1(\vx,\vy)$ is the geodesic distance on $\Omega$. It agrees with Euclidean distance when $\Omega$ is convex. There are approximate versions of the upper bound of the form
\begin{equation}
d_\Omega(\vx,\vy) \leq |\vx-\vy| + C|\vx-\vy|^{1+\alpha},
\end{equation}
for $|\vx-\vy|\leq r_\Omega$ provided the boundary $\partial \Omega$ is uniformly $C^{1,\alpha}$ (here, $r_\Omega$ depends only on the domain and its boundary regularity). The typical assumption is $C^{1,1}$ in which case the error term is quadratic. See Assumption 5.2 and Proposition 5.1 in \cite{bungert2023uniform}.}


\begin{proposition}\label{small path}
{\blue Suppose that $\Omega$ has a $C^{1,1}$ boundary, $g\in W^{1,\infty}(\Omega)$, $\rho\in L^1(\Omega)$,} and Assumption \ref{assump:connectivity_functional} holds. For any \(\varepsilon>0\), $\vx\in\Omega$ and
\(
\lambda=\frac{\bar{g}}{\lgg}+1
\) with $ \varepsilon< \lgg\min\{r_\Omega/2\lambda, 1/\lambda B\}$,
we have for any $\vx,\vy\in\Omega$ with $d(\vx,\vy)\le \varepsilon$,
\begin{align}\label{eq:small_path}
 \left|d_g(\vx,\vy)-g(\vx)|\vx-\vy|\right|\le \frac{\varepsilon^{2}}{\lgg^{2}}
\left[\,6\lambda\;\sup_{\va\in \Omega\cap B_{4\lambda\varepsilon/\lgg}(\vx)} |\nabla g(\va)| \;+\; B\bar g \right]
\end{align}
where $B$ and $r_\Omega$ are constants only dependent on $\Omega$.
\end{proposition}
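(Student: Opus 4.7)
The plan is to produce matching upper and lower bounds for $d_g(\vx,\vy)$ around the linearization $g(\vx)|\vx-\vy|$, each of order $\varepsilon^{2}/\lgg^{2}$. Two observations follow immediately from the hypothesis $d_g(\vx,\vy)\le\varepsilon$ together with Assumption~\ref{assump:connectivity_functional} and will be used throughout: (i) $|\vx-\vy|\le\varepsilon/\lgg$; and (ii) if $\gamma^{\ast}$ is a minimizing curve from $\vx$ to $\vy$, then its Euclidean length $L^{\ast}$ and every intermediate point $\gamma^{\ast}(t)$ satisfy $L^{\ast},\,|\gamma^{\ast}(t)-\vx|\le d_g(\vx,\vy)/\lgg\le\varepsilon/\lgg$, the point-wise bound following from the sub-minimality of the restriction of $\gamma^{\ast}$ to $[0,t]$. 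In particular $\gamma^{\ast}\subset B_{4\lambda\varepsilon/\lgg}(\vx)$ automatically.

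For the upper bound I would invoke the standard geometric consequence of $C^{1,1}$ boundary regularity: there exist constants $r_{\Omega},B>0$ depending only on $\Omega$ such that whenever $|\vx-\vy|\le r_{\Omega}$, one can join $\vx$ to $\vy$ by an admissible curve $\gamma_{\mathrm{up}}\in C^{1}([0,1];\Omega)$ of length $L_{\mathrm{up}}\le|\vx-\vy|+B|\vx-\vy|^{2}$. The two smallness assumptions on $\varepsilon$, together with $\lambda\ge 2$, guarantee the existence of $\gamma_{\mathrm{up}}$ and force $L_{\mathrm{up}}\le 2\varepsilon/\lgg$, so $\gamma_{\mathrm{up}}\subset B_{4\lambda\varepsilon/\lgg}(\vx)$ as well. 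Writing $M:=\sup_{\va\in\Omega\cap B_{4\lambda\varepsilon/\lgg}(\vx)}|\nabla g(\va)|$ and combining the Lipschitz bound $g(\gamma_{\mathrm{up}})\le g(\vx)+M|\gamma_{\mathrm{up}}-\vx|$ with the arc-length identity $\int|\gamma_{\mathrm{up}}-\vx|\,|\gamma_{\mathrm{up}}'|\,\D t=L_{\mathrm{up}}^{2}/2$ yields
\[
d_g(\vx,\vy)\;\le\;g(\vx)|\vx-\vy|\;+\;B\bar g\,\tfrac{\varepsilon^{2}}{\lgg^{2}}\;+\;C_{1}M\,\tfrac{\varepsilon^{2}}{\lgg^{2}},
\]
for a small absolute constant $C_{1}$.

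For the lower bound, the same linearization applied to $\gamma^{\ast}$ in arc-length parametrization gives $g(\gamma^{\ast}(s))\ge g(\vx)-Ms$, hence
\[
d_g(\vx,\vy)=\int_{0}^{L^{\ast}}\!g(\gamma^{\ast})\,\D s\;\ge\;g(\vx)L^{\ast}-\tfrac{M}{2}(L^{\ast})^{2}\;\ge\;g(\vx)|\vx-\vy|-C_{2}M\,\tfrac{\varepsilon^{2}}{\lgg^{2}},
\]
where $L^{\ast}\ge|\vx-\vy|$ and $L^{\ast}\le\varepsilon/\lgg$ both come from observation~(ii). Combining the two one-sided estimates and absorbing the numerical constants into the cleaner form $6\lambda M+B\bar g$ (slack since $\lambda\ge 2$) then produces \eqref{eq:small_path}.

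The only non-routine ingredient is the geometric lemma yielding $\gamma_{\mathrm{up}}$: when the chord $[\vx,\vy]$ pokes outside $\Omega$ near a non-convex part of $\partial\Omega$ a straight segment is inadmissible, and the quadratic excess $B|\vx-\vy|^{2}$ precisely measures the curvature cost of deflecting back into $\Omega$. This is the only place where the $C^{1,1}$ hypothesis and the constants $r_{\Omega},B$ enter; the remainder of the argument is a single linearization of $g$ on the small ball that the hypothesis $d_g(\vx,\vy)\le\varepsilon$ automatically provides.
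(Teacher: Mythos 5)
Your argument is correct in substance and recovers the stated bound with room to spare, but it streamlines the paper's route at two intermediate steps. The paper localizes the optimal path by a contradiction argument (Lemma~\ref{two bound}: under the ratio condition \eqref{eq:ratio_condition} the minimizer cannot leave $B_{r}(\vx)$ with $r=\lambda\varepsilon$) and controls the variation of $g$ through a separate oscillation bound (Lemma~\ref{small ggap}: $\sup_{\vy,\vz\in\Omega\cap B_r(\vx)}|g(\vz)-g(\vy)|\le 6r\sup_{\Omega\cap B_{4r}(\vx)}|\nabla g|$, proved by integrating along intrinsic geodesics), and then assembles the estimate from $\lgg_r(\vx)\,|\vx-\vy|\le d_g(\vx,\vy)\le \bar g_r(\vx)\bigl(|\vx-\vy|+B|\vx-\vy|^2\bigr)$. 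You instead note that the minimizer's Euclidean length is at most $d_g(\vx,\vy)/\lgg\le\varepsilon/\lgg$, which localizes it immediately and with no ratio condition, and you linearize $g$ directly along the two curves actually in play: the minimizer for the lower bound and, for the upper bound, the near-straight geodesic supplied by the $C^{1,1}$ estimate $d_\Omega(\vx,\vy)\le|\vx-\vy|+B|\vx-\vy|^2$ (Lemma~\ref{geodesic distance}), which is the same key geometric ingredient the paper uses. This dispenses with both auxiliary lemmas and explains why your constants ($2$ and $1/2$) sit comfortably under the paper's $6\lambda$.

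Two small repairs are needed, neither affecting the result. First, the pointwise bound should be stated with respect to arc length along the curve, $g(\gamma(t))\le g(\vx)+M\,s(t)$ with $M=\sup_{\va\in\Omega\cap B_{4\lambda\varepsilon/\lgg}(\vx)}|\nabla g(\va)|$, not with the Euclidean distance $|\gamma(t)-\vx|$: the chord from $\vx$ to $\gamma(t)$ may leave $\Omega$, so the Euclidean form is not justified by a sup of $|\nabla g|$ over $\Omega\cap B_{4\lambda\varepsilon/\lgg}(\vx)$ alone; relatedly, $\int_0^1|\gamma(t)-\vx|\,|\gamma'(t)|\,\D t = L^2/2$ is an inequality ($\le$), not an identity, with equality only for radial segments. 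Using $s(t)$ throughout gives $\int_0^1 g(\gamma)\,|\gamma'|\,\D t\le g(\vx)L+\tfrac{M}{2}L^2$ directly, so your bound stands. Second, you take for granted the existence of a minimizing curve $\gamma^{\ast}$; the paper secures this via the bi-Lipschitz equivalence of $d_g$ with $d_\Omega$ and Hopf--Rinow, and you should either invoke that or run the lower bound with $\delta$-almost-minimizers and let $\delta\to0$, which costs nothing in the final constant.
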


This result shows that, if $\vx$ and $\vy$ are sufficiently close and the connectivity density $g$ is smooth, the weighted distance between them is well approximated by the straight‐line distance. Therefore, in Section~\ref{Method_Ex}, we simplify the distance calculation in our experiments by using the local Euclidean (straight‐line) distance.




\section{Energy Approach}
\subsection{Convergence from discrete energy to nonlocal energy}

In this subsection, we use the nonlocal energy as a bridge between the discrete energy and the continuum energy. First, we define the nonlocal energy functional:
\begin{equation}\label{eq:nonlocal_energy}
    I_\varepsilon[u] := \frac{1}{\sigma_\eta \varepsilon^d} \int_{\Omega}\int_{\Omega} \eta\Bigl( \frac{d_g(\vx,\vy)}{\varepsilon} \Bigr) \frac{\bigl(u(\vx)-u(\vy)\bigr)^2}{\varepsilon^2}\,\rho(\vx)\rho(\vy)\D \vy\D \vx,
\end{equation}
where the normalization constant is given by
\[
\sigma_\eta = \int_{\mathbb{R}^d} \eta(|\vw|)\,|w_1|^2\D \vw.
\]

Our goal is to show that
\(
    \left|E_{n}[u]-I_{\varepsilon}[u]\right|
\)
is small. The idea of the proof is based on Bernstein's inequality for U-statistics, which implies that when the number of sample points is sufficiently large, the corresponding U-statistics will converge in probability to their expectation.




\begin{theorem}\label{main1}
     Suppose that Assumption \ref{assump:connectivity_functional} holds. For any $0<\delta\le1$ and Lipschitz continuous $u$ we have that 
     \[
\left|E_{n}[u]-I_{\varepsilon}[u]\right| \leq C \operatorname{Lip}[u]^2\left(\delta+\frac{1}{n}\right).
\]
holds with probability at least $1-2 \exp \left(-c n \varepsilon^d \delta^2\right)$, where $c,C$ are constants dependent on $\lgg$ and $\rho$.
\end{theorem}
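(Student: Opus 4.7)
The approach is to recognize $E_n[u]$ as a degree-two U-statistic and apply a Bernstein-type concentration. Define the kernel
\[
h(\vx,\vy) := \frac{1}{\sigma_\eta\,\varepsilon^{d+2}}\,\eta\!\left(\frac{d_g(\vx,\vy)}{\varepsilon}\right)\bigl(u(\vx)-u(\vy)\bigr)^{2}.
\]
The $i=j$ terms of $E_n[u]$ vanish, and $\mathbb{E}[h(\vx_1,\vx_2)] = I_\varepsilon[u]$, so setting $U_n := \frac{1}{n(n-1)}\sum_{i\ne j} h(\vx_i,\vx_j)$ gives $E_n[u] = \tfrac{n-1}{n}U_n$ and therefore
\[
E_n[u] - I_\varepsilon[u] = \frac{n-1}{n}\bigl(U_n - I_\varepsilon[u]\bigr) - \frac{1}{n}\,I_\varepsilon[u].
\]
A direct estimate $I_\varepsilon[u] \le C\operatorname{Lip}[u]^2$, obtained by changing variables $\vy = \vx + (\varepsilon/\lgg)\vw$ and using that $\eta$ has compact support in $[0,1]$ together with $\rho\in L^\infty$, absorbs the deterministic bias into the $\operatorname{Lip}[u]^2/n$ term on the right of the claim. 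It then suffices to control the fluctuation $U_n - I_\varepsilon[u]$.

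The two inputs needed for Bernstein are uniform and variance bounds on $h$. Because $\eta\equiv 0$ on $[1,\infty)$ and $d_g(\vx,\vy)\ge \lgg|\vx-\vy|$ under Assumption \ref{assump:connectivity_functional} (immediate from the path integral), the kernel is supported in $|\vx-\vy|\le\varepsilon/\lgg$. Combining this with the Lipschitz bound $(u(\vx)-u(\vy))^2\le \operatorname{Lip}[u]^2|\vx-\vy|^2$ yields
\[
\|h\|_\infty \le \frac{C\,\operatorname{Lip}[u]^2}{\varepsilon^{d}}.
\]
For the second moment I use $\eta^2\le\|\eta\|_\infty\,\eta$, the bound $(u(\vx)-u(\vy))^4\le\operatorname{Lip}[u]^4|\vx-\vy|^4$, and the same change of variables; the integrand now carries an additional $|\vw|^4$, so the inner integral scales as $\varepsilon^{d+4}$ and
\[
\operatorname{Var}[h] \le \mathbb{E}[h^{2}] \le \frac{C\,\operatorname{Lip}[u]^4}{\varepsilon^{d}}.
\]

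With these in hand I apply the Bernstein inequality for second-order U-statistics (via Hoeffding's reduction to an average of sums of $\lfloor n/2\rfloor$ independent blocks followed by the scalar Bernstein bound), which gives
\[
\mathbb{P}\!\bigl(|U_n - I_\varepsilon[u]|\ge t\bigr) \le 2\exp\!\left(-\frac{c\,n\,t^{2}}{\operatorname{Var}[h] + \|h\|_\infty\,t}\right).
\]
Choosing $t = c_0\operatorname{Lip}[u]^2\delta$ and substituting the preceding estimates, both terms in the denominator are at most $C\operatorname{Lip}[u]^4/\varepsilon^d$ (the second using $\delta\le 1$), so the exponent simplifies to at most $-c\,n\,\varepsilon^{d}\delta^{2}$, matching the target probability. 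The main obstacle I anticipate is securing the correct $\varepsilon^d$ concentration rate: both $\|h\|_\infty$ and the pointwise size of $h^{2}$ blow up as $\varepsilon\to 0$, and it is only the combination of the $O(\varepsilon^d)$ effective support with the two extra Lipschitz factors $|\vx-\vy|^2$ in the integrand for $\mathbb{E}[h^{2}]$ that lets the Bernstein exponent scale as $n\varepsilon^d\delta^2$ rather than something strictly weaker.
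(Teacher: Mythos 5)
Your proposal is correct and follows essentially the same route as the paper: you identify $E_n[u]$ as $\tfrac{n-1}{n}$ times a degree-two U-statistic whose kernel is supported in $|\vx-\vy|\le\varepsilon/\lgg$ by Assumption~\ref{assump:connectivity_functional}, derive the same $O(\operatorname{Lip}[u]^2\varepsilon^{-d})$ uniform and $O(\operatorname{Lip}[u]^4\varepsilon^{-d})$ variance bounds, apply Bernstein's inequality for U-statistics, and absorb the $\tfrac{1}{n}I_\varepsilon[u]$ bias via $I_\varepsilon[u]\le C\operatorname{Lip}[u]^2$, exactly as in the paper's argument (which cites Arcones rather than spelling out the Hoeffding block reduction).
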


\begin{proof}
The result follows directly from Bernstein’s inequality for U‐statistics (see \cite{arcones1995bernstein}); a complete argument appears in \cite[Lemma 5.28]{calder2020calculus} and \cite[Lemma 3.6]{bungert2024convergence}. The only adjustment here is the use of Assumption \ref{assump:connectivity_functional} to control the support of our kernel. Indeed, by Assumption \ref{assump:connectivity_functional} we have
\[
\{(\vx,\vy)\mid d_g(\vx,\vy)\le \varepsilon\}
\;\subset\;
\{(\vx,\vy)\mid \underline{g}\,\lvert\vx-\vy\rvert \le \varepsilon\},
\]then the support of $\eta\Bigl( \frac{d_g(\vx,\vy)}{\varepsilon} \Bigr)$ for each $\vx$ is the subset of $B_{ \frac{\varepsilon}{\lgg}}(\vx)$.
The remainder of the proof is identical to that in \cite[Lemma 5.28]{calder2020calculus}. We show the details in the supplementary materials.
\end{proof}

\subsection{Convergence from nonlocal energy to local energy}
Now, we estimate the error between the local energy and the nonlocal energy. First, we define the \textit{local energy} as
\begin{equation}\label{eq:local_energy}
I[u] := \int_{\Omega} \rho^2(\vx) \frac{|\nabla u(\vx)|^2}{g^{d+2}(\vx)}\D \vx.
\end{equation}
The idea for calculating the error is based on a first-principles calculation. In particular, one expands \(u(\vy)-u(\vx)\) via a Taylor expansion about \(\vx\) and then compares the resulting integrals with those defining the nonlocal energy \(I_\varepsilon[u]\). Proposition \ref{small path} allows us to control the remainder terms precisely.

The final estimate shows that the gap between the non-local energy
\(I_{\varepsilon}[u]\) and its local counterpart \(I[u]\) is first-order in
\(\varepsilon\); namely, there exists a constant \(C^{*}>0\) such that
\[
  \bigl|\,I_{\varepsilon}[u]-I[u]\,\bigr|
  \;\le\;C^{*}\,\varepsilon .
\]
The constant \(C^{*}\) depends on the connectivity density \(g\) only
through the quantity
\[
  \int_{\Omega}
     \sup_{\va\in\Omega\cap B_{3\lambda\varepsilon/\lgg}(\vx)}
         |\nabla g(\va)|\,
     \rho(\vx)\,d\vx,
\]
rather than on the full \(W^{1,\infty}(\Omega)\)-norm of \(g\), as
\(\varepsilon\to0\) it converges to the \(W^{1,1}(\Omega)\)-seminorm
(Proposition~\ref{W11}).


\begin{theorem}\label{main2}
Suppose that $\Omega$ has $C^{1,1}$ boundary, Assumption~\ref{assump:connectivity_functional} holds, $g\in W^{1,\infty}(\Omega)$, and $\rho,\eta$ are Lipschitz continuous with Lipschitz constants $\alpha$ and $\mu$. Let $u\in C^{1,1}(\Omega)$ with
\(
\beta = \|u\|_{C^{1,1}(\Omega)}.
\)
Then, for any $\varepsilon$ with $0< \varepsilon< \lgg\min\{r_\Omega/2\lambda, 1/\lambda B\}$, the error between the nonlocal energy $I_\varepsilon[u]$ and the local energy $I[u]$ is estimated by \begin{align}
  \bigl|I_\varepsilon[u] - I[u]\bigr| \le C_* \varepsilon.  
\end{align}Here
\begin{align}
C_*= &C\left(\alpha^2\beta\,\operatorname{Lip}(u) + \alpha^2\left(1+\frac{1}{\lgg^{d+3}}\right)\,\operatorname{Lip}^2(u)\right)\notag\\&+\frac{\alpha(\operatorname{Lip}(u))^2\mu V_d(1)}{\sigma_\eta\lgg^{d+4}} \left[\,6\lambda\;\int_\Omega\sup_{\va\in \Omega\cap B_{4\lambda\varepsilon/\lgg}(\vx)} |\nabla g(\va)| \;\rho(\vx)\,\D  \vx+\; B\bar g \right]\notag
\end{align}
where $\lambda,B,\lgg$ are defined in Assumption \ref{assump:connectivity_functional} and Proposition \ref{small path},
\(C>0\) depends only on the domain \(\Omega\) and \(\sigma_\eta\), $\operatorname{Lip}(u)$ denotes the Lipschitz constant of $u$, $V_d(1)$ is the volume of the unit ball in $\mathbb{R}^d$, and $\sigma_\eta>0$ is a constant associated with the kernel $\eta$.
\end{theorem}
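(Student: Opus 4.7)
The plan is to compare $I_\varepsilon[u]$ with $I[u]$ through a three-step reduction, each of which replaces one ingredient of the nonlocal kernel by its pointwise analogue at $\vx$. Throughout the argument I will exploit the fact that, by the support of $\eta$ and Assumption~\ref{assump:connectivity_functional}, the inner integrand is supported on $\{\vy\in\Omega:|\vy-\vx|\le\varepsilon/\lgg\}$, so all integrands can be replaced by their values on the ball $B_{\varepsilon/\lgg}(\vx)$.

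\emph{Step 1: replace $d_g(\vx,\vy)$ by $g(\vx)|\vx-\vy|$ in the kernel.} Using the Lipschitz constant $\mu$ of $\eta$ together with Proposition~\ref{small path}, I get
\[
\Bigl|\eta\!\Bigl(\tfrac{d_g(\vx,\vy)}{\varepsilon}\Bigr)-\eta\!\Bigl(\tfrac{g(\vx)|\vx-\vy|}{\varepsilon}\Bigr)\Bigr|
\le \frac{\mu\,\varepsilon}{\lgg^{2}}\Bigl[6\lambda\sup_{\va\in\Omega\cap B_{4\lambda\varepsilon/\lgg}(\vx)}|\nabla g(\va)|+B\bar g\Bigr].
\]
Multiplying by $(u(\vy)-u(\vx))^2/\varepsilon^2 \le \operatorname{Lip}(u)^2|\vy-\vx|^2/\varepsilon^2$, integrating over the ball of radius $\varepsilon/\lgg$ (which contributes a factor $V_d(1)(\varepsilon/\lgg)^{d+2}$), and then integrating in $\vx$ against $\rho(\vx)\rho(\vy)$ produces exactly the second line of $C_*\varepsilon$ once the $\varepsilon^{-d}$ and $\sigma_\eta^{-1}$ normalisations are accounted for.

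\emph{Step 2: Taylor expand and replace $\rho(\vy)$ by $\rho(\vx)$.} Write $\vy=\vx+\varepsilon\vw$ and use $u\in C^{1,1}$ to obtain $u(\vy)-u(\vx)=\varepsilon\,\nabla u(\vx)\!\cdot\!\vw+R(\vx,\vw)$ with $|R|\le \tfrac{\beta}{2}\varepsilon^2|\vw|^2$. Squaring gives a main term $\varepsilon^2(\nabla u(\vx)\!\cdot\!\vw)^2$ and an error controlled by $\varepsilon^3\beta\operatorname{Lip}(u)|\vw|^3$. The Lipschitz bound on $\rho$ gives $|\rho(\vy)-\rho(\vx)|\le\alpha\varepsilon|\vw|$. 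Combining these with $|\vw|\le 1/\lgg$ and integrating produces the first line of $C_*\varepsilon$ (with the $\alpha^2\beta\operatorname{Lip}(u)$ and $\alpha^2(1+\lgg^{-(d+3)})\operatorname{Lip}(u)^2$ contributions).

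\emph{Step 3: reduce to the local integral by an anisotropic change of variables.} In the remaining leading contribution
\[
\frac{1}{\sigma_\eta\varepsilon^d}\int_{\Omega}\rho(\vx)^2\int_{B_{\varepsilon/\lgg}(\vx)}\eta\!\Bigl(\tfrac{g(\vx)|\vx-\vy|}{\varepsilon}\Bigr)\frac{(\nabla u(\vx)\!\cdot(\vy-\vx))^2}{\varepsilon^2}\,\D\vy\,\D\vx,
\]
I substitute $\vz=g(\vx)(\vy-\vx)/\varepsilon$, producing a Jacobian $(\varepsilon/g(\vx))^d$ and absorbing two further factors of $g(\vx)^{-1}$ from $|\vy-\vx|^2$. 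After extending the $\vz$-integration from $\Omega_\vx$ to all of $\mathbb{R}^d$ (the difference is supported only when $\operatorname{dist}(\vx,\partial\Omega)\le\varepsilon/\lgg$, contributing an $O(\varepsilon)$ boundary term of the type already collected in $C_*$), the isotropy identity $\int_{\mathbb{R}^d}\eta(|\vz|)z_iz_j\,\D\vz=\sigma_\eta\delta_{ij}$ cancels $\sigma_\eta$ and leaves precisely $\int_\Omega \rho(\vx)^2|\nabla u(\vx)|^2/g(\vx)^{d+2}\,\D\vx = I[u]$.

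The main obstacle is bookkeeping in Step~1: the natural Lipschitz bound on $\eta$ would cost a global $\|\nabla g\|_{L^\infty}$ factor, but the dependence on $g$ in Proposition~\ref{small path} is localised to a ball of radius $4\lambda\varepsilon/\lgg$ around $\vx$. I have to keep this local supremum \emph{inside} the $\vx$-integral so that, after integration against $\rho$, it yields the quantity $\int_\Omega\sup_{\va\in\Omega\cap B_{4\lambda\varepsilon/\lgg}(\vx)}|\nabla g(\va)|\,\rho(\vx)\,\D\vx$ appearing in $C_*$, rather than its $L^\infty$ counterpart. This is what ultimately produces the $W^{1,1}$ character of the prefactor in the $\varepsilon\to 0$ limit (via Proposition~\ref{W11}).
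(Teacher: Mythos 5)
Your proposal is correct and follows essentially the same route as the paper's proof: Taylor expansion of $u$ and $\rho$, replacement of $d_g(\vx,\vy)$ by $g(\vx)|\vx-\vy|$ in the kernel via Proposition~\ref{small path} and the Lipschitz constant $\mu$ of $\eta$ (keeping the local supremum of $|\nabla g|$ inside the $\vx$-integral), a rescaling $\vz=g(\vx)(\vy-\vx)/\varepsilon$, and a boundary-layer estimate using $|\partial\Omega_{\varepsilon}|\lesssim\varepsilon$. The only cosmetic differences are the order of the reductions and your use of the isotropy identity $\int\eta(|\vz|)z_iz_j\,\D\vz=\sigma_\eta\delta_{ij}$ in place of the paper's explicit orthogonal rotation aligning $\nabla u(\vx)$ with $\ve_1$.
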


\begin{proof}
For any $\vx,\vy\in\Omega$ satisfying 
\[
d(\vx,\vy) \le \varepsilon,
\]
we first apply the Taylor expansion:
\[
u(\vy) - u(\vx) = \nabla u(\vx)\cdot (\vy-\vx) + \mathcal{O}(\beta\,\varepsilon^2).
\]
It follows that
\[
\bigl(u(\vy) - u(\vx)\bigr)^2 = \bigl(\nabla u(\vx)\cdot (\vy-\vx)\bigr)^2 + \mathcal{O}\bigl(\beta\,\operatorname{Lip}(u)\,\varepsilon^3\bigr).
\]
Next, we have
\(
\rho(\vy) = \rho(\vx) + \mathcal{O}(\alpha\,\varepsilon).
\)

Recalling the definition of the nonlocal energy
\[
I_\varepsilon[u] = \frac{1}{\sigma_\eta\,\varepsilon^d} \int_\Omega\int_\Omega \eta\Bigl(\frac{d_g(\vx,\vy)}{\varepsilon}\Bigr)
\frac{(u(\vx)-u(\vy))^2}{\varepsilon^2} \rho(\vx)\rho(\vy)\,\D  \vy\,\D  \vx,
\]
we substitute the above approximations to obtain
\begin{align}\label{11}
I_\varepsilon[u]
&=\frac{1}{\sigma_\eta\,\varepsilon^{d+2}} \int_\Omega\int_\Omega \eta\Bigl(\frac{d_g(\vx,\vy)}{\varepsilon}\Bigr)
\Bigl[\bigl(\nabla u(\vx)\cdot (\vy-\vx)\bigr)^2 + \mathcal{O}\bigl(\beta\,\operatorname{Lip}(u)\,\varepsilon^3\bigr)\Bigr] \\[1mm]
&\hspace{2.5in}\times \Bigl[\rho(\vx) + \mathcal{O}(\alpha\,\varepsilon)\Bigr]\rho(\vx)\,\D  \vy\,\D  \vx \nonumber\\[1mm]
&=\frac{1}{\sigma_\eta\,\varepsilon^{d+2}} \int_\Omega\int_\Omega \eta\Bigl(\frac{d_g(\vx,\vy)}{\varepsilon}\Bigr)
\bigl(\nabla u(\vx)\cdot (\vy-\vx)\bigr)^2\rho^2(\vx)\,\D  \vy\,\D  \vx \nonumber\\[1mm]
&\hspace{2.5in} +\mathcal{O}\Bigl((\alpha^2\beta\,\operatorname{Lip}(u) + \alpha^2\,\operatorname{Lip}^2(u))\,\varepsilon\Bigr). \notag
\end{align}

We now analyze the leading term. We can decompose the distance as
\begin{align}\label{eq:dg_decomp}
d_g(\vx,\vy) =  g(\vx)|\vx-\vy| + e(\vx,\vy),\notag
\end{align} where the error term $e(\vx,\vy)$ collects the variations of $g$ along the optimal path, which can be bounded by results in Proposition \ref{small path}:\begin{align*}
    |e(\vx,\vy)|\le e^*(\vx):=\frac{\varepsilon^{2}}{\lgg^{2}}
\left[\,6\lambda\;\sup_{\va\in \Omega\cap B_{4\lambda\varepsilon/\lgg}(\vx)} |\nabla g(\va)| \;+\; B\bar g \right]
\end{align*}for any $d(\vx,\vy)\le \varepsilon$.

{\blue Thus, we write
\begin{align}
&\frac{1}{\sigma_\eta\,\varepsilon^{d+2}} \int_\Omega\int_\Omega \eta\Bigl(\frac{d_g(\vx,\vy)}{\varepsilon}\Bigr)
\bigl(\nabla u(\vx)\cdot (\vy-\vx)\bigr)^2\rho^2(\vx)\,\D  \vy\,\D  \vx \nonumber\\[1mm]
&=\frac{1}{\sigma_\eta\,\varepsilon^{d+2}} \int_\Omega\int_{\Omega\cap B_{\frac{\varepsilon}{\lgg}}(\vx)}
\eta\Bigl(\frac{g(\vx)|\vx-\vy|+ e(\vx,\vy)}{\varepsilon}\Bigr)
\bigl(\nabla u(\vx)\cdot (\vy-\vx)\bigr)^2\rho^2(\vx)\,\D  \vy\,\D  \vx \nonumber\\[1mm]
&=\frac{1}{\sigma_\eta\,\varepsilon^{d+2}} \int_\Omega\int_{\Omega\cap B_{\frac{\varepsilon}{\lgg}}(\vx)}
\eta\Bigl(\frac{g(\vx)|\vx-\vy|}{\varepsilon}\Bigr)
\bigl(\nabla u(\vx)\cdot (\vy-\vx)\bigr)^2\rho^2(\vx)\,\D  \vy\,\D  \vx \nonumber\\[1mm]
&\quad +\fO\left( \frac{1}{\sigma_\eta\,\varepsilon^{d+3}} \int_\Omega\int_{\Omega\cap B_{\frac{\varepsilon}{\lgg}}(\vx)}
\mu\,e^*(\vx)
\bigl(\nabla u(\vx)\cdot (\vy-\vx)\bigr)^2\rho^2(\vx)\,\D  \vy\,\D  \vx\right),\label{mainterm}
\end{align}}
where $\mu = \|\eta\|_{C^1(\mathbb{R})}$ and we have used a first-order Taylor expansion in the argument of $\eta$, and the support of $\eta\Bigl( \frac{d_g(\vx,\vy)}{\varepsilon} \Bigr)$ for each $\vx$ is the subset of $B_{ \frac{\varepsilon}{\lgg}}(\vx)$. 

To simplify the inner integrals, we choose an orthogonal matrix $\vA\in\mathbb{R}^{d\times d}$ so that
\[
\vA\,\nabla u(\vx) = |\nabla u(\vx)|\,\ve_1, \quad \text{with } \ve_1 = (1,0,\dots,0).
\]
Perform the change of variables
\[
\vz = \vx + \vA(\vy-\vx).
\]
Since $\vA$ is orthogonal, we have $|\vx-\vy| = |\vx-\vz|$ and
\[
\nabla u(\vx)\cdot (\vy-\vx)= \vA\nabla u(\vx)\cdot \vA(\vy-\vx)=|\nabla u(\vx)|\,\ve_1\cdot(\vz-\vx)= |\nabla u(\vx)|(z_1 - x_1).
\]
Then the first term in \eqref{mainterm} can be written as
\[
\frac{1}{\sigma_\eta\,\varepsilon^{d+2}} \int_\Omega |\nabla u(\vx)|^2 \rho^2(\vx)
\int_{B_{\frac{\varepsilon}{\lgg}}(\vx)\cap V} \eta\Bigl(\frac{g(\vx)|\vx-\vz|}{\varepsilon}\Bigr) |z_1-x_1|^2\,\D  \vz\,\D  \vx,
\]
where 
\[
V = \vx + \vA(\Omega-\vx).
\]
If $\operatorname{dist}(\vx,\partial\Omega) \ge \varepsilon/\lgg$, a change of variables $g(\vx)(\vz-\vx) = \varepsilon \,\vw$ shows that
\begin{align}
\int_{B_{\frac{\varepsilon}{\lgg}}(\vx)\cap V} \eta\Bigl(\frac{g(\vx)|\vx-\vz|}{\varepsilon}\Bigr) |z_1-x_1|^2\,\D  \vz
&=\frac{\varepsilon^{d+2}}{g(\vx)^{d+2}}\int_{B_1(\mathbf{0})} \eta(|\vw|)|w_1|^2\,\D  \vw=\frac{\varepsilon^{d+2}\sigma_\eta}{g(\vx)^{d+2}},
\end{align}where we recall that
{\blue \(
\sigma_\eta = \int_{B_1(\mathbf{0})} \eta(|\vw|)\,|w_1|^2\,\mathrm{d}\vw.
\)}
A similar bound holds for all $\vx\in\Omega$, i.e., \[\int_{B_{\frac{\varepsilon}{\lgg}}(\vx)\cap V} \eta\Bigl(\frac{g(\vx)|\vx-\vz|}{\varepsilon}\Bigr) |z_1-x_1|^2\,\D  \vz
\le\frac{\varepsilon^{d+2}\sigma_\eta}{g(\vx)^{d+2}}.\]Therefore, denote that
{\blue $\partial\Omega_{\varepsilon}:=\{\vx\in\Omega\mid \operatorname{dist}(\vx,\partial\Omega) < \varepsilon/\lgg\}$}, we have that \begin{align}
    &\left|I[u]-\frac{1}{\sigma_\eta\,\varepsilon^{d+2}} \int_\Omega\int_{\Omega\cap B_{\frac{\varepsilon}{\lgg}}(\vx)}
\eta\Bigl(\frac{g(\vx)|\vx-\vy|}{\varepsilon}\Bigr)
\bigl(\nabla u(\vx)\cdot (\vy-\vx)\bigr)^2\rho^2(\vx)\,\D  \vy\,\D  \vx\right|\notag\\\le&\left|\frac{1}{\sigma_\eta\,\varepsilon^{d+2}} \int_{\partial\Omega_{\varepsilon}}\int_{\Omega\cap B_{\frac{\varepsilon}{\lgg}}(\vx)}
\eta\Bigl(\frac{g(\vx)|\vx-\vy|}{\varepsilon}\Bigr)
\bigl(\nabla u(\vx)\cdot (\vy-\vx)\bigr)^2\rho^2(\vx)\,\D  \vy\,\D  \vx\right|\notag\\&+\left|\int_{\partial\Omega_{\varepsilon}} \rho^2(\vx) \frac{|\nabla u(\vx)|^2}{g^{d+2}(\vx)}\D \vx\right|\notag\\\le &2\left|\int_{\partial\Omega_{\varepsilon}} \rho^2(\vx) \frac{|\nabla u(\vx)|^2}{g^{d+2}(\vx)}\D \vx\right|\le 2C_\Omega\frac{\alpha^2\operatorname{Lip}(u)^2}{\lgg^{d+3}}\varepsilon,\label{12}
\end{align}where \(C_\Omega\) depends only on \(\Omega\). 
Here we used the fact that if \(\partial\Omega\) is of class \(C^{1,1}\), then the tubular neighborhood 
\(\partial\Omega_{\varepsilon}=\{x\in\Omega:\operatorname{dist}(x,\partial\Omega)<\varepsilon/\lgg\}\) 
satisfies \(|\partial\Omega_{\varepsilon}|\le C_\Omega\varepsilon/\lgg\) for small \(\varepsilon>0\).

It remains to estimate the remainder term
\[
R := \frac{1}{\sigma_\eta\,\varepsilon^{d+3}} \int_\Omega\int_{\Omega\cap B_{\frac{\varepsilon}{\lgg}}(\vx)} \mu\,e^*(\vx)
\bigl(\nabla u(\vx)\cdot (\vy-\vx)\bigr)^2 \rho^2(\vx)\,\D  \vy\,\D  \vx.
\]
The estimate of this part can be obtained from Proposition \ref{small path}. \begin{align}
    R\le& \frac{\alpha(\operatorname{Lip}(u))^2\mu}{\sigma_\eta\varepsilon^{d-1}\lgg^4}\int_\Omega\int_{\Omega\cap B_{\frac{\varepsilon}{\lgg}}(\vx)} \left[\,6\lambda\;\sup_{\va\in \Omega\cap B_{4\lambda\varepsilon/\lgg}(\vx)} |\nabla g(\va)| \;+\; B\bar g \right] \rho(\vx)\,\D  \vy\,\D  \vx\notag\\\le&\varepsilon\frac{\alpha(\operatorname{Lip}(u))^2\mu V_d(1)}{\sigma_\eta\lgg^{4+d}}\int_\Omega \left[\,6\lambda\;\sup_{\va\in \Omega\cap B_{4\lambda\varepsilon/\lgg}(\vx)} |\nabla g(\va)| \;+\; B\bar g \right] \rho(\vx)\,\D  \vx\label{Rbound}
\end{align}

By combining the bounds in \eqref{11}, \eqref{12}, and \eqref{Rbound}, we arrive at the desired conclusion.
\end{proof}

In the estimate for $\lvert I_\varepsilon[u] - I[u]\rvert$, the coefficient of the first term, 
\[
C\left(\alpha^2\beta\,\operatorname{Lip}(u) + \alpha^2\left(1+\frac{1}{\lgg^{d+3}}\right)\,\operatorname{Lip}^2(u)\right),
\] 
follows from a first–principles expansion and matches the analysis in \cite{calder2020calculus}, 
while the coefficient of the second term, 
\[
\fO\!\left(\int_{\Omega}\sup_{\va\in \Omega\cap B_{\frac{4\lambda\varepsilon}{\underline g}}(\vx)}
|\nabla g(\va)|\,\rho(\vx)\,\D\vx\right),
\]
arises from approximating the optimal path by a straight segment, which converges to the 
$W^{1,1}$–seminorm of $g$.

\begin{proposition}\label{W11}
Let $g\in W^{1,\infty}(\Omega)$ and assume $|\nabla g|$ is continuous at almost every point of $\Omega$.  
Let $\rho\in C(\Omega)$ be bounded. Then
\[
  \lim_{\varepsilon\to0}
  \int_{\Omega}
     \sup_{\va\in\Omega\cap B_{4\lambda\varepsilon/\lgg}(\vx)}
        |\nabla g(\va)|\,\rho(\vx)\,\D\vx
  \;=\;
  \int_{\Omega}|\nabla g(\vx)|\,\rho(\vx)\,\D\vx .
\]
\end{proposition}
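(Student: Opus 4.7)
The plan is to prove this by the dominated convergence theorem applied to the integrand $f_\varepsilon(\vx)\rho(\vx)$, where
$f_\varepsilon(\vx) := \sup_{\va \in \Omega \cap B_{4\lambda\varepsilon/\lgg}(\vx)} |\nabla g(\va)|$.
I would first fix a precise representative of $|\nabla g|$ which, by hypothesis, is continuous at almost every point of $\Omega$.

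First I would establish the pointwise a.e.\ convergence $f_\varepsilon(\vx) \to |\nabla g(\vx)|$ as $\varepsilon \to 0$. The lower bound $f_\varepsilon(\vx) \ge |\nabla g(\vx)|$ is immediate because $\vx$ itself lies in the ball. For the matching upper bound, fix a continuity point $\vx$ of $|\nabla g|$ and $\delta > 0$; continuity yields $r>0$ such that $\bigl||\nabla g(\va)| - |\nabla g(\vx)|\bigr| < \delta$ for all $\va \in \Omega \cap B_r(\vx)$, and then choosing $\varepsilon$ so small that $4\lambda\varepsilon/\lgg < r$ gives $f_\varepsilon(\vx) \le |\nabla g(\vx)| + \delta$. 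Since by hypothesis the continuity points of $|\nabla g|$ form a set of full measure in $\Omega$, this establishes the required a.e.\ convergence.

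Next I would produce the dominating function. Because $g \in W^{1,\infty}(\Omega)$, we have $|\nabla g| \le \|\nabla g\|_{L^\infty(\Omega)}$ a.e., so $f_\varepsilon(\vx)\,\rho(\vx) \le \|\nabla g\|_{L^\infty(\Omega)}\,\|\rho\|_{\infty}$ uniformly in $\vx$ and $\varepsilon$. Since $\Omega$ is bounded (indeed compact, as fixed at the start of Section~2), this constant is integrable over $\Omega$, so the dominated convergence theorem delivers the stated limit.

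The only genuine subtlety I foresee is the measurability and pointwise well-definedness of $f_\varepsilon$: a priori $\nabla g$ is a Sobolev object defined only up to null sets, so to treat $f_\varepsilon$ as a literal pointwise supremum one must work with the representative of $|\nabla g|$ that is continuous at almost every point (whose existence is exactly the standing hypothesis). With this representative fixed and bounded, and the ball $B_{4\lambda\varepsilon/\lgg}(\vx)$ depending continuously on $\vx$, the function $f_\varepsilon$ is upper semicontinuous in $\vx$ and therefore Borel measurable, which justifies the use of dominated convergence. Everything else reduces to the two short estimates above.
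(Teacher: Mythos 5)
Your proposal is correct and follows essentially the same route as the paper's proof: pointwise a.e.\ convergence of the supremum at continuity points of $|\nabla g|$, domination by $\|g\|_{W^{1,\infty}}\,\|\rho\|_{\infty}$ on the bounded domain, and the dominated convergence theorem. Your additional remark on fixing an a.e.-continuous representative and the upper semicontinuity (hence measurability) of $f_\varepsilon$ is a welcome refinement of a point the paper leaves implicit, but it does not change the argument.
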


\begin{proof}
Since \(g\in W^{1,\infty}(\Omega)\), we have
\(0\le|\nabla g|\le \|g\|_{W^{1,\infty}}\) almost everywhere.
Hence the integrand is dominated by
\(\|g\|_{W^{1,\infty}}\,\rho(\vx)\in L^{1}(\Omega)\).
For almost every \(\vx\in\Omega\) the ball
\(B_{4\lambda\varepsilon/\lgg}(\vx)\) shrinks to \(\{\vx\}\) as
\(\varepsilon\to0\), so the supremum converges to
\(|\nabla g(\vx)|\) due to that \(|\nabla g|\) is continuous at almost every point of \(\Omega\).
Pointwise convergence together with the uniform bound allows us to use the dominated convergence theorem, giving the desired limit.
\end{proof}


\subsection{Continuous Diffusion Equation}

We have obtained the continuum energies derived from the \(\varepsilon\)-graph. In this section, we briefly discuss the variational formulation of the obtained continuum energies.

Recall that the continuum energy for is given by
\begin{equation}\label{eq:iso_energy}
I[u] := \int_{\Omega} \rho^2(\vx) \frac{|\nabla u(\vx)|^2}{g^{d+2}(\vx)}\D\vx.
\end{equation}
An equilibrium state is characterized by the vanishing of the first variation of \(I[u]\). Formally, setting the Euler--Lagrange derivative \(\frac{\delta I[u]}{\delta u}=0\) leads to
\begin{equation}\label{eq:iso_equilibrium}
\nabla\cdot\!\left(\frac{\rho^2(\vx)}{g^{d+2}(\vx)}\,\nabla u(\vx)\right) = 0.
\end{equation}
For the dynamics, consider the gradient flow associated with \(I[u]\); that is, 
\[
\frac{\partial u}{\partial t} = -\frac{1}{2}M\,\frac{\delta I[u]}{\delta u},
\]
where \(M>0\) denotes the mobility. This formally yields the evolution equation
\begin{equation}\label{eq:iso_flow}
\frac{\partial u}{\partial t} = -M\,\nabla\cdot\!\left(\frac{\rho^2(\vx)}{g^{d+2}(\vx)}\,\nabla u(\vx)\right).
\end{equation}

The rigorous convergence of solutions from the discrete energy to the continuum energy, as well as a detailed analysis of the resulting diffusion equations, will be addressed in future work.

\section{Numerical Results}
We put into a more general setup, a reaction–diffusion framework is that many biological processes in Alzheimer’s disease can be naturally described in this way. The diffusion term captures spatial spreading in the brain, such as the propagation of misfolded proteins like amyloid- $\beta$ or $\tau$ along neural pathways, while the reaction term represents local biochemical processes including aggregation, clearance, or enzymatic degradation. 

More specifically, we consider the following nonlinear reaction-diffusion equation defined on a brain-shaped domain:
\begin{equation} \label{eq:model}
\begin{cases}
\displaystyle u_{t} - \nabla \cdot (D(\vx) \nabla u) = C (1 - u) u, & \quad (\vx, t) \in \Omega \times (0, T), \\[2ex]
\displaystyle \frac{\partial u}{\partial \vn} = 0, & \quad (\vx, t) \in \partial \Omega \times (0, T), \\[2ex]
u(\vx, 0) = u_{0}(\vx), & \quad \vx \in \Omega,
\end{cases}
\end{equation}
where $u = u(\vx, t)$ represents the state variable of interest, $D(\vx)$ is a spatially varying diffusion coefficient, $C \ge 0$ is a reaction parameter, and $\Omega \subset \mathbb{R}^3$ denotes the computational domain corresponding to a brain geometry. The homogeneous Neumann boundary condition ensures no-flux across the boundary $\partial \Omega$.

When $C = 0$, equation~\eqref{eq:model} reduces to the classical heat equation with spatially heterogeneous diffusion:
\[
u_t - \nabla \cdot (D(\vx) \nabla u) = 0.
\]
This case corresponds to pure diffusion without reaction and is mathematically equivalent to the equilibrium equation. In this context, the diffusion process models transport in a medium with no production or depletion.

To faithfully model realistic brain dynamics, the domain $\Omega$ is reconstructed from MRI data. Specifically, we use a 3D brain image dataset consisting of $91 \times 109 \times 91$ voxels, representing the spatial resolution in the $x$-, $y$-, and $z$-directions, respectively. Each voxel contains intensity information corresponding to a specific brain region. As an illustrative example, Figure~\ref{fig:brain-3d} shows a axial slice of 3D MRI image.

\begin{figure}[ht]
\centering
\includegraphics[width=0.32\textwidth]{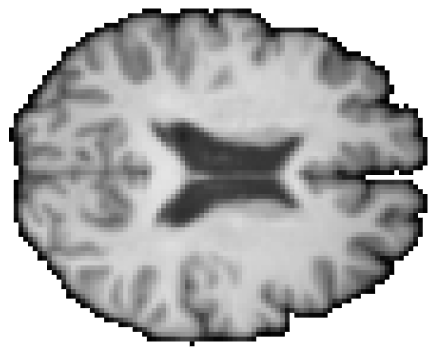}
\includegraphics[width=0.3\textwidth]{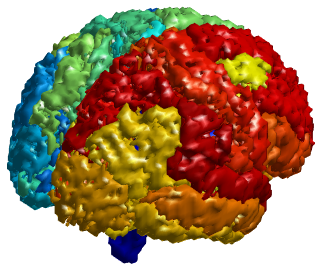}
\includegraphics[width=0.32\textwidth]{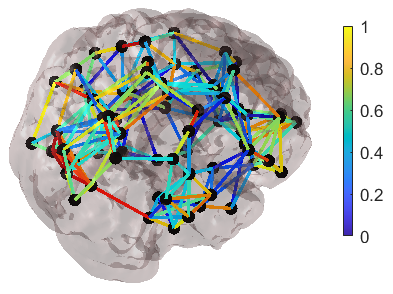}
\caption{Left: Axial slice from the 3D MRI volume used to construct the brain domain $\Omega$. Middle: Brain parcellation result.
Right: Visualization of functional connectivity (FC) between brain parcels. Each parcel is represented by its centroid, and FC is shown as lines connecting parcel pairs. Line colors indicate connection strength.}
\label{fig:brain-3d}
\end{figure}

In our modeling framework, the spatially varying diffusion coefficient $D(\vx)$ is not predefined, but rather learned from data derived from brain connectivity patterns. To achieve this, we employ a brain parcellation scheme that divides the brain domain into $68$ anatomically meaningful regions, known as parcels. Each parcel represents a distinct subregion of the brain.

Functional connectivity (FC) refers to the statistical dependency between activity patterns of different brain parcels, typically computed from resting-state fMRI signals \cite{shahhosseini2022functional}. The FC matrix encodes the strength of these inter-parcel relationships and serves as a proxy for functional communication pathways.

Rather than using all pairwise FC values—which may include noisy or weak long-range connections—we restrict attention to local functional connectivity between spatially adjacent or nearby parcels. Specifically, instead of fixing a distance cutoff, we adapt the neighborhood threshold according to the number of parcels $n$. More precisely, we set
\begin{equation}
\varepsilon = C \Big(\tfrac{\log n}{n}\Big)^{1/d}
\quad \text{or} \quad
\varepsilon = C \Big(\tfrac{\log n}{n}\Big)^{1/(d+2)}, \label{varepsilon}
\end{equation}
where $d$ is the spatial dimension, $C>0$ is a constant, and $n$ is the total number of parcels. The particular forms of the scaling in \eqref{varepsilon} are motivated by Theorem~\ref{main1}, which provides the theoretical basis for such choices. In particular, if we choose $\delta=\eps$ in order to obtain an $O(\eps)$ rate from Theorem~\ref{main1}, then the concentration estimate requires $n\eps^{d+2} \geq C\log n$ to ensure that the probability is larger than $1-2/n^p$, where $C$ depends on $p$. This condition yields a lower bound of the form $\eps \geq C(\log n/n)^{1/(d+2)}$. Another common scaling choice is $\eps \sim (\log n / n)^{1/d}$, but in this case we obtain a weaker probability guarantee.

This focus allows for more stable learning of biologically relevant diffusivity patterns. In total, we use 227 local FC values among the 68 parcels to inform and train the diffusion function \( D(\vx) \), aligning it with the underlying network structure of the brain. More details are provided in Section~\ref{Method_Ex}, where the diffusivity is modeled via the relation
\[
\frac{1}{g_\theta(\vx)^{d+2}} = D(\vx).
\]

Figure~\ref{fig:brain-3d} illustrates the brain parcellation and the corresponding subset of functional connectivity used in the model. These components provide the structural foundation for learning a heterogeneous, data-driven diffusion coefficient.

\subsection{Learning Spatially Varying Diffusivity via Neural Networks}\label{Method_Ex}

Recall that the equilibrium equation is given by
\begin{equation}\label{eq:iso_equilibrium1}
\nabla\cdot\!\left(\frac{\rho^2(\vx)}{g^{d+2}(\vx)}\,\nabla u(\vx)\right) = 0.
\end{equation}
To determine the solution of \eqref{eq:iso_equilibrium1}, we must specify the functions \(\rho(\vx)\) and \(g(\vx)\).

For our experiments, we assume that the underlying distribution of the data is uniform; that is, we take \(\rho(\vx)\) to be a constant function (i.e. $\rho(x)=1$). With a constant density, the remaining challenge is to select or estimate an appropriate form for the connectivity functional \(g(\vx)\), which encodes the local geometry of the domain and influences the diffusion process.

In the following, we describe how \(g(\vx)\) is obtained and demonstrate the resulting behavior of the diffusion model. We approximate \(g\) using a parametric model \(g_{\vtheta}:\Omega\to(0,\infty)\), where \(\vtheta\) denotes the parameters of the approximator. The approximator can be chosen as a polynomial (by the Weierstrass approximation theorem), a rational function \cite{nakatsukasa2018aaa}, or via neural network approximations \cite{yang2024near,yang2023nearly1,yang2023nearly}.

Suppose that our data is given by the connectivity matrix \(\{w_{ij}(\vx_i,\vx_j)\}_{(i,j)\in \sA}\). We first employ a numerical method to approximate the optimal path between any pair \(\vx,\vy\). Denote this optimal path by \(\gamma_{\vtheta}[\vx,\vy](t)\), which satisfies the following ODE:
\[
\frac{\mathrm{d}}{\mathrm{d}t}\Bigl(g_{\vtheta}(\gamma(t))\,\frac{\gamma'(t)}{|\gamma'(t)|}\Bigr) - \nabla g_{\vtheta}(\gamma(t))\,|\gamma'(t)| = 0,
\]
with the boundary conditions
\[
\gamma_{\vtheta}[\vx,\vy](0) = \vx \quad \text{and} \quad \gamma_{\vtheta}[\vx,\vy](1) = \vy.
\]
Then, the length of the optimal path is computed as
\[
w[\vx,\vy](\vtheta) := \int_0^1 |\gamma_{\vtheta}[\vx,\vy]'(t)|\,g_{\vtheta}\Bigl(\gamma_{\vtheta}[\vx,\vy](t)\Bigr) \, \mathrm{d}t.
\]
Subsequently, we determine the optimal parameters by solving
\begin{equation}\label{eq:loss_function}
\min_{\vtheta} L(\vtheta) := \min_{\vtheta} \frac{1}{|\sA|} \sum_{(i,j)\in \sA} \Biggl|\eta\Biggl(\frac{w[\vx_i,\vx_j](\vtheta)}{\varepsilon}\Biggr) - w_{ij}\Biggr|^2,
\end{equation}
where \(\eta\) is a fixed function and \(\varepsilon\) is a small parameter. Here, the index set \(\sA\) is chosen so that the pairs \(\vx,\vy\) are sufficiently close and the straight line connecting them lies within \(\Omega\).

The training procedure described above is rather complex, particularly due to the optimization in the optimal path. For each pair \(\vx,\vy\), we must solve the optimal path problem, and the parameter \(\vtheta\) remains unknown. Moreover, to train \eqref{eq:loss_function} we need an explicit expression for \(w[\vx,\vy](\vtheta)\).

Assume \(g_{\boldsymbol\theta}\) is Lipschitz and meets
Assumption~\ref{assump:connectivity_functional}.
For any close pair \((\vx,\vy)\) with \(|\vx-\vy|\ll1\) whose straight
segment lies in \(\Omega\), Lemma~\ref{small ggap} and
Proposition~\ref{small path} give
\begin{align}
&\frac{1}{|\vx-\vy|}\left|\int_{0}^1 |\gamma'_{\vx,\vy}(t)|\,g_{\vtheta}\bigl(\gamma_{\vx,\vy}(t)\bigr)\,\D t - \int_{0}^1 |\vx-\vy|\,g_{\vtheta}\Bigl(\vx+t(\vy-\vx)\Bigr)\D t\right|\notag\\\le&\frac{\left|\int_{0}^1 (|\gamma'_{\vx,\vy}(t)|-|\vx-\vy|)g_{\vtheta}\bigl(\gamma_{\vx,\vy}(t)\bigr)\D t \right|}{|\vx-\vy|}+\left|\int_{0}^1 g_{\vtheta}\bigl(\gamma_{\vx,\vy}(t)\bigr)-g_{\vtheta}\Bigl(\vx+t(\vy-\vx)\Bigr)\D t \right|\notag\\=&\fO(|\vx-\vy|).\notag
\end{align}
so we may replace the geodesic by the straight line.
For distant pairs \(|\vx-\vy|\not\ll1\) the weight
\(\eta\bigl(w[\vx,\vy]/\varepsilon\bigr)\) is already negligible, and if the
segment \([\vx,\vy]\) leaves \(\Omega\) we simply discard that pair. Therefore, we can rewrite our optimization problem as
\begin{equation}\label{eq:loss_function1}
\min_{\vtheta} L(\vtheta) := \min_{\vtheta} \frac{1}{|\sA|}\sum_{(i,j)\in \sA} \Biggl|\eta\Biggl(\frac{\frac{|\vx_i-\vx_j|}{N}\sum_{k=0}^N g_{\vtheta}\Bigl(\vx_i+\frac{k}{N}(\vx_j-\vx_i)\Bigr)}{\varepsilon}\Biggr) - w_{ij}\Biggr|^2,
\end{equation}
where \( \sA \subset \{1, \dots, 68\} \times \{1, \dots, 68\} \) denotes the set of parcel pairs considered, \( \vx_i \) and \( \vx_j \) are the centroids of the respective parcels, \( w_{ij} \) is the observed FC between parcels \( i \) and \( j \), \( \varepsilon \) is a positive normalization constant, and \( N \) is the number of quadrature points used along the linear path between \( \vx_i \) and \( \vx_j \).

To simplify training and avoid parameter scaling ambiguities, we absorb the constant \( \varepsilon \) into the network parameters and apply the inverse of \( \eta \) to the data. The modified form of the loss function becomes

\begin{equation}\label{eq:loss_function_modified}
\min_{\vtheta} L(\vtheta) := \min_{\vtheta} \frac{1}{|\sA|}\sum_{(i,j)\in \sA} \Biggl|\Biggl({\frac{|\vx_i-\vx_j|}{N}\sum_{k=0}^N g_{\vtheta}\Bigl(\vx_i+\frac{k}{N}(\vx_j-\vx_i)\Bigr)}\Biggr) - \eta^{-1}(w_{ij})\Biggr|^2. 
\end{equation}

This objective measures the discrepancy between the aggregated predicted diffusivity along parcel connections and the transformed FC data, guiding the training of \( g_\theta \) to align with the observed connectivity structure. In our implementation, we set \( \eta(x) = \exp(-x^2) \). \bluetext{This choice is numerically stable in our setting because we only aggregate functionally connected pairs among spatially nearby parcels, so the resulting FC weights are bounded away from zero and the arguments of $\eta^{-1}$ never approach the unstable regime.}

\subsection{Neural Network Model for Learning Diffusivity}

As described in Section~\ref{Method_Ex}, we construct a data-driven loss function to learn the spatially varying diffusivity $D(\vx)$. To parameterize $D(\vx)$, we employ a neural network model implemented using PyTorch. The network takes a 3D spatial coordinate $\vx \in \Omega \subset \mathbb{R}^3$ as input and outputs a positive scalar value representing the diffusivity at that location. \bluetext{The detailed architecture of the neural network, including activation functions and training procedures, is provided in Supplementary Materials Sec.~\ref{app:nn_details}.}

Using only nearby parcels limited the data size and led to overfitting, while including distant parcels increased data volume but introduced noise and complexity. After exploring both extremes, we found that a carefully balanced dataset yielded the best achievable performance in our setting, though the results still leave some room for improvement.

\begin{remark}
One of the key challenges in training the diffusivity model stemmed from the limited size of the dataset. Although the total number of brain parcels was 68, we only used functionally connected (FC) pairs among spatially nearby parcels, resulting in just \bluetext{the 227 FC edges between parcels.}  This relatively small dataset constrained the model’s capacity to generalize and made robust training difficult. Moreover, because the FC information was associated with the centroids of each parcel, we trained the model using these centroids as representative spatial inputs. This approximation introduced additional errors, as it does not capture intra-parcel variability.
\end{remark}

Using the trained the neural network model, we compute the spatially varying diffusivity $D(\vx)$ as
\[
D(\vx) = \frac{1}{g_\theta(\vx)^{d+2}},
\]
where $g_\theta(\vx)$ is the neural network output and $d = 3$ denotes the spatial dimension. This ensures consistency with the theoretical formulation of the diffusion operator in our model. This $g_\theta(\vx)$ reflects the learned spatial heterogeneity, informed by 
functional connectivity structure.

The initial condition $u_0(\vx)$ for the PDE simulation is extracted from PET scan data, which provides a realistic spatial distribution of the quantity of interest at time $t = 0$. Figure~\ref{fig:final-diffusivity} displays the resulting diffusivity field $D(\vx)$ derived from the trained model and initial condition $u_0(\vx)$ extracted from PET scan data. 

\begin{figure}[ht]
\centering
\begin{overpic}[width=0.3\textwidth]{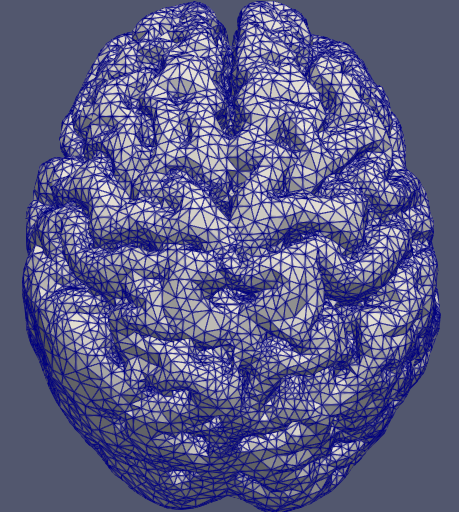}
\end{overpic}
\begin{overpic}[width=0.3\textwidth]{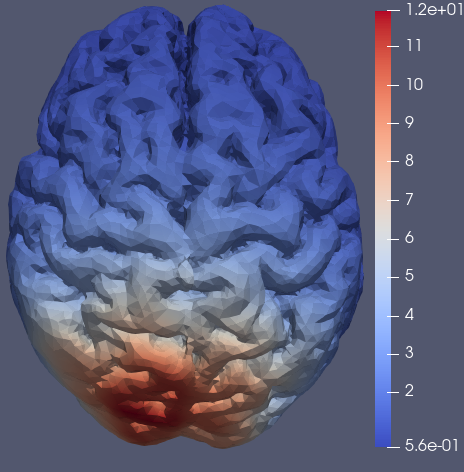}
\put(50,107){\makebox(0,0){\textbf{ $\frac{1}{g_\theta(\vx)^{d+2}}$}}}
\end{overpic}
\begin{overpic}[width=0.29\textwidth]{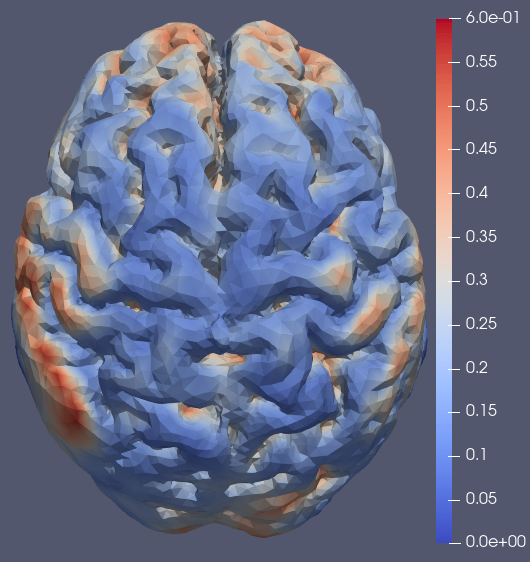}
\put(50,105){\makebox(0,0){\textbf{$ u_0(\vx)$}}}
\end{overpic}
\caption{Left: 3D brain domain reconstructed from MRI data. Middle: Computed diffusivity field $D(\vx)$ obtained from the trained neural network model $D(\vx) = \frac{1}{g_\theta(\vx)^{d+2}}$, with $d = 3$. Right: The initial condition $u_0(\vx)$ was obtained from PET scan imaging.}
\label{fig:final-diffusivity}
\end{figure}

\subsubsection{Recovery of Handcrafted Diffusivity via Neural Network Training}

To evaluate the identifiability of our neural network model and validate the end-to-end training pipeline, we performed a series of inverse experiments. Instead of learning the diffusivity function \( g_\theta(\vx) \) from empirical functional connectivity data, we prescribed a known ground-truth scalar field \( g_{\mathrm{true}}(\vx) \) and used it to compute synthetic weights \( w_{ij} \) according to the integral relation:
\[
    w_{ij} \approx \eta \left( \int_{0}^{1} |\vx_i - \vx_j| \cdot g_{\mathrm{true}}\left( \vx_i + t(\vx_j - \vx_i) \right) \, \D t \right),
\]
where \( \vx_i, \vx_j \in \mathbb{R}^3 \) are points sampled from the brain domain, and \( \eta(x) = \exp(-x^2) \) is a fixed nonlinearity modeling connectivity decay.

\paragraph{Training Procedure}
We sampled \( n \) spatial points \( \{ \vx_1, \dots, \vx_n \} \subset \Omega \) and constructed the training set by collecting all node pairs \( (i,j) \) such that \( |\vx_i - \vx_j| < \varepsilon \), where $\varepsilon$ is chosen according to the scaling in Eq.~\eqref{varepsilon}. For each pair, we evaluated \( w_{ij} \) as above. The neural network model \( g_\theta(\vx) \) was then trained to minimize the following loss function:
\begin{equation} \label{eq:loss_function_modified2}
    \min_{\vtheta} L(\vtheta) := \frac{1}{|\sA|} \sum_{(i,j) \in \sA} \left| \left( \frac{|\vx_i - \vx_j|}{N} \sum_{k=0}^N g_\theta \left( \vx_i + \frac{k}{N}(\vx_j - \vx_i) \right) \right) - \eta^{-1}(w_{ij}) \right|^2,
\end{equation}
where \( \sA \) denotes the set of valid training pairs and \( N \) is the number of interpolation steps used in the discrete approximation of the path integral. \bluetext{This procedure was repeated 100 times with independently sampled training sets, where in each run the network was trained until the loss value decreased below $10^{-7}$ or no further improvement was observed, and the aggregated results across these repetitions were used for comparison.}

\paragraph{Domain Normalization}
Let \( \vx_i^{\mathrm{raw}} \in \mathbb{R}^3 \) denote the original position of node \( i \). Prior to applying any function, we normalize all coordinates using standard score normalization:
\[
    \vx_i = \frac{\vx_i^{\mathrm{raw}} - \boldsymbol{\mu}}{\boldsymbol{\text{Var}}},
\]
where \( \boldsymbol{\mu} \in \mathbb{R}^3 \) and \( \boldsymbol{\text{Var}} \in \mathbb{R}^3 \) are the empirical mean and standard deviation vectors across all points. This normalization centers the domain at the origin and ensures unit variance along each axis.

\paragraph{Ground-Truth Function}
We defined two classes of ground-truth scalar fields on the normalized domain:

\begin{itemize}
    \item \textbf{Sigmoid-based radial field:}
    \[
        g_{\mathrm{true}}(\vx) = \sigma\left( -a ( \|\vx\| - b ) \right) + c,
    \]
    where \( \sigma(t) = 1 / (1 + e^{-t}) \) is the sigmoid function, \( a > 0 \) controls sharpness, \( b \) centers the radial profile, and \( c > 0 \) ensures non-negativity.

    \item\bluetext{\textbf{Cosine-based radial field:}
    \[
        g_{\mathrm{true}}(\vx) = A \cdot \cos\left( \pi - a ( \|\vx\| - b ) \right) + c,
    \]
    where \( A > 0 \) is the amplitude, \( a > 0 \) controls frequency, \( b \) centers the wave radially, and \( c \) is an offset that aligns the function with a prescribed value range.
} 
\end{itemize}

\paragraph{Evaluation Metrics}
To evaluate generalization, we sampled 200 new test points disjoint from training and constructed a validation set by including pairwise distances less than $\varepsilon$, where $\varepsilon$ follows the scaling in Eq.~\eqref{varepsilon}. For direct function recovery, we computed the mean absolute error (MAE) and root mean square error (RMSE):

\begin{equation}
    \label{MAE}
    \text{MAE} = \frac{1}{n} \sum_{i=1}^n |g_{\mathrm{true}}(\vx_i) - g_\theta(\vx_i)|, \quad
    \text{RMSE} = \sqrt{ \frac{1}{n} \sum_{i=1}^n (g_{\mathrm{true}}(\vx_i) - g_\theta(\vx_i))^2 }.
\end{equation}
We also compared the associated diffusivity fields \( D(\vx) = 1 / g(\vx)^5 \), and evaluated the relative errors:
\begin{equation}
\label{RMAE}
    \text{RMAE} = \frac{1}{n} \sum_{i=1}^n \left| \frac{D_{\mathrm{true}}(\vx_i) - D_\theta(\vx_i)}{D_{\mathrm{true}}(\vx_i)} \right|,~
    \text{RRMSE} = \sqrt{ \frac{1}{n} \sum_{i=1}^n \left( \frac{D_{\mathrm{true}}(\vx_i) - D_\theta(\vx_i)}{D_{\mathrm{true}}(\vx_i)} \right)^2 }.
\end{equation}

Tables~\ref{tab:recovery-sigmoid-1d_5}, \ref{tab:recovery-cosine-1d_5} and 
Figs.~\ref{fig:sig_results5}, \ref{fig:cos_results5}, summarize model performance using the ground-truth profiles.

\begin{table}[htbp]
\centering
\footnotesize
\begin{tabular}{|c|c|c|c|c|c|c|}
\hline
\textbf{n} & \textbf{Final Loss} & \textbf{Val Loss} & \textbf{MAE} & \textbf{RMSE} & \textbf{Rel. MAE} & \textbf{Rel. RMSE} \\
\hline
100 & $9.99e-08 $ & $1.84e-04 $ & $3.51e-02 $ & $5.03e-02 $ & $1.91e-01 $ & $2.80e-01 $ \\
200 & $1.00e-07 $ & $6.98e-06 $ & $7.50e-03 $ & $1.14e-02 $ & $4.66e-02 $ & $7.58e-02 $ \\
400 & $1.00e-07 $ & $7.03e-07 $ & $2.86e-03 $ & $4.37e-03 $ & $1.70e-02 $ & $2.60e-02 $ \\
800 & $1.00e-07 $ & $2.43e-07 $ & $1.88e-03 $ & $2.89e-03 $ & $1.10e-02 $ & $1.65e-02 $ \\
1600 & $1.00e-07 $ & $1.69e-07 $ & $1.66e-03 $ & $2.54e-03 $ & $9.68e-03 $ & $1.43e-02 $ \\
\hline
\end{tabular}
\caption{Performance summary for the sigmoid-based ground-truth function with $\varepsilon = C \left({\log n}/{n}\right)^{1/(d+2)}$ chosen according to Eq.~\eqref{varepsilon} (Theorem~\ref{main1}), for varying numbers of parcels $n$. Each value represents the mean over 100 independent experiments to mitigate randomness.}
\label{tab:recovery-sigmoid-1d_5}
\end{table}

\begin{table}[htbp]
\centering
\footnotesize
\begin{tabular}{|c|c|c|c|c|c|c|}
\hline
\textbf{n} & \textbf{Final Loss} & \textbf{Val Loss} & \textbf{MAE} & \textbf{RMSE} & \textbf{Rel. MAE} & \textbf{Rel. RMSE} \\
\hline
100 & $1.32e-07 $ & $1.03e-04 $ & $8.56e-03 $ & $1.59e-02 $ & $6.94e-02 $ & $1.57e-01 $ \\
200 & $2.06e-07 $ & $9.17e-06 $ & $2.33e-03 $ & $4.32e-03 $ & $1.70e-02 $ & $3.09e-02 $ \\
400 & $1.37e-07 $ & $1.27e-06 $ & $1.19e-03 $ & $2.11e-03 $ & $8.39e-03 $ & $1.38e-02 $ \\
800 & $1.83e-07 $ & $7.00e-07 $ & $1.02e-03 $ & $1.65e-03 $ & $7.20e-03 $ & $1.09e-02 $ \\
1600 & $1.20e-07 $ & $3.99e-07 $ & $9.31e-04 $ & $1.47e-03 $ & $6.60e-03 $ & $9.78e-03 $ \\
\hline
\end{tabular}
\caption{Performance summary for the sigmoid-based ground-truth function with $\varepsilon = C \left({\log n}/{n}\right)^{1/(d+2)}$ chosen according to Eq.~\eqref{varepsilon} (Theorem~\ref{main1}), for varying numbers of parcels $n$. Each value represents the mean over 100 independent experiments to mitigate randomness.}
\label{tab:recovery-cosine-1d_5}
\end{table}

\begin{figure}[ht]
    \centering
        \includegraphics[width=0.49\textwidth]{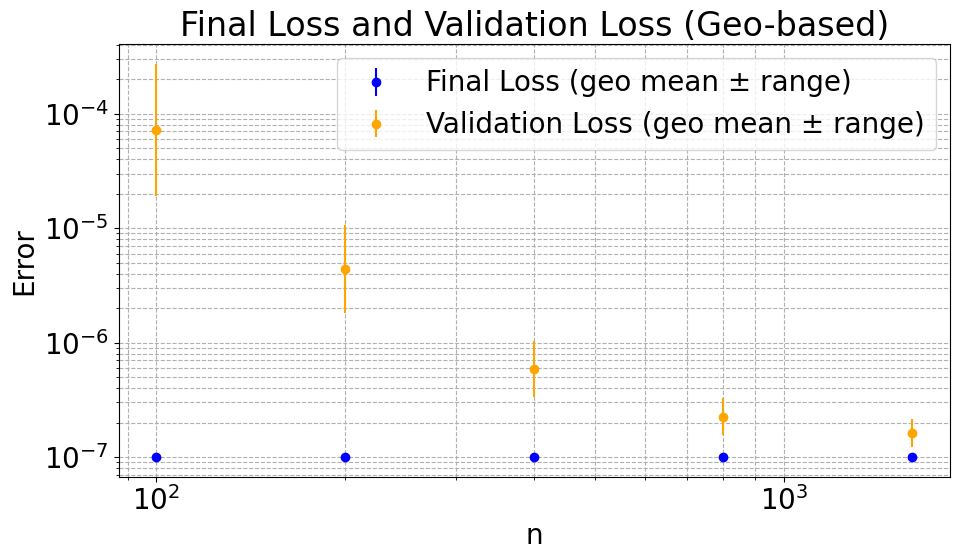}
        \includegraphics[width=0.49\textwidth]{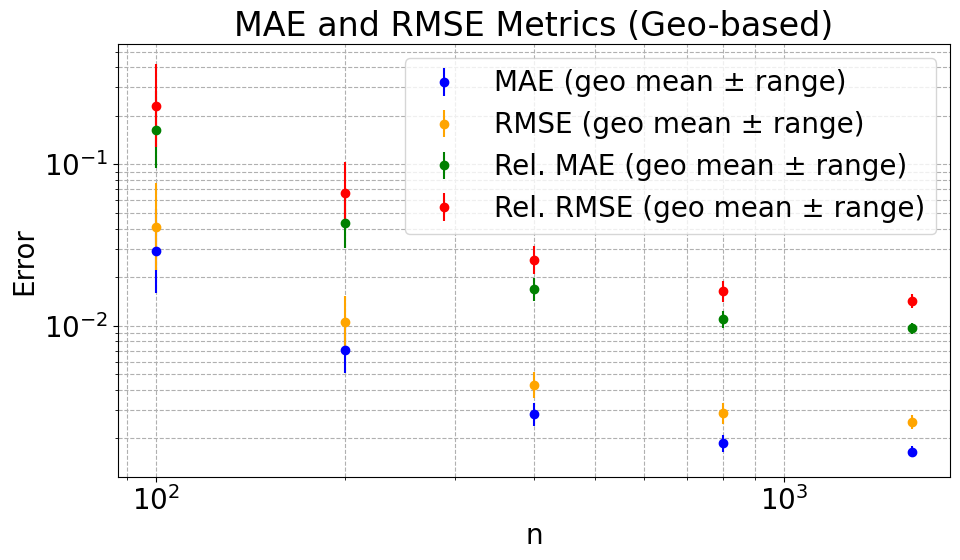}
    \caption{\bluetext{Performance summary for the cosine-based ground-truth function with $\varepsilon = C \left({\log n}/{n}\right)^{1/(d+2)}$ chosen according to Eq.~\eqref{varepsilon} (Theorem~\ref{main1}), for varying numbers of parcels $n$. Each experiment is repeated 100 times with independently sampled datasets. For each metric, we report the geometric mean across these repetitions, together with the multiplicative standard deviation, i.e.\ values of the form $\exp(\mu \pm \sigma)$ where $\mu$ and $\sigma$ denote the mean and standard deviation of the log-transformed results. Both axes are shown in logarithmic scale. For the definitions of Final Loss and Validation Loss, see Eq.~\ref{eq:loss_function_modified2}; for the other metrics, refer to Eqs.~\ref{MAE} and~\ref{RMAE}.}}
    \label{fig:sig_results5}
\end{figure}

\begin{figure}[ht]
    \centering
        \includegraphics[width=0.49\textwidth]{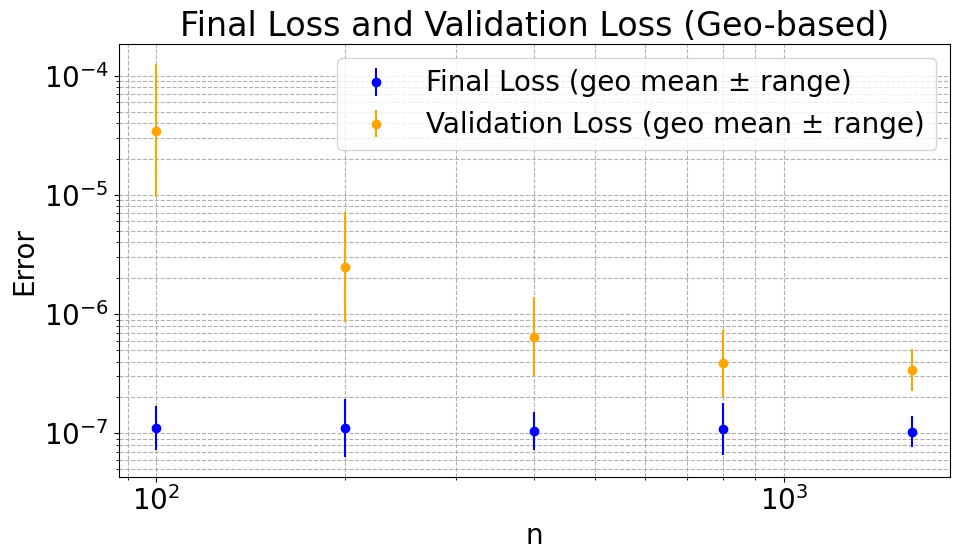}
        \includegraphics[width=0.49\textwidth]{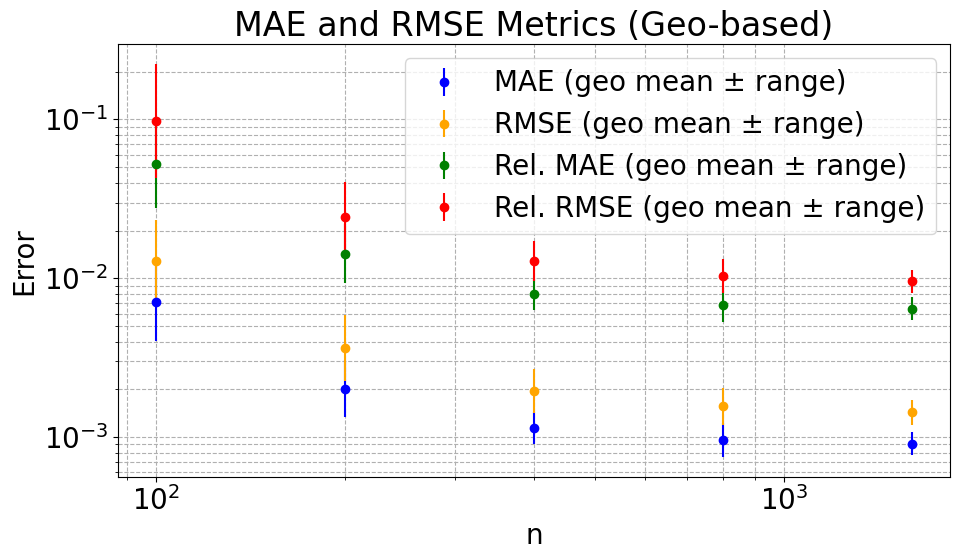}
    \caption{\bluetext{Performance summary for the cosine-based ground-truth function with $\varepsilon = C \left({\log n}/{n}\right)^{1/(d+2)}$ chosen according to Eq.~\eqref{varepsilon} (Theorem~\ref{main1}), for varying numbers of parcels $n$. Each experiment is repeated 100 times with independently sampled datasets. For each metric, we report the geometric mean across these repetitions, together with the multiplicative standard deviation, i.e.\ values of the form $\exp(\mu \pm \sigma)$ where $\mu$ and $\sigma$ denote the mean and standard deviation of the log-transformed results. Both axes are shown in logarithmic scale. For the definitions of Final Loss and Validation Loss, see Eq.~\ref{eq:loss_function_modified2}; for the other metrics, refer to Eqs.~\ref{MAE} and~\ref{RMAE}.}}
    \label{fig:cos_results5}
\end{figure}

Our inverse experiments demonstrate that both sigmoid-based and cosine-based ground-truth diffusivity functions can be accurately recovered by the neural network model, even with relatively few sampled nodes. As shown in Tables~\ref{tab:recovery-sigmoid-1d_5}, \ref{tab:recovery-cosine-1d_5} and Figs.~\ref{fig:sig_results5}, \ref{fig:cos_results5}, increasing the number of training samples beyond a certain threshold yields only marginal improvements in loss and recovery error.

\bluetext{The recovery error decreases almost linearly with the number of samples at first, but the improvement slows down in later stages. Our theoretical results (Theorem~\ref{main1} with the constants in~\ref{varepsilon}) predict a linear decay throughout, so the observed discrepancy requires explanation. We considered several possible causes: finite sample size, numerical error, and training limitations. Increasing the number of samples and refining numerical accuracy (using double precision and denser quadrature points for integration) produced almost no improvement, ruling out the first two. We also tested larger network architectures, but the overall behavior remained the same, with an initial near-linear decrease followed by saturation indicating that the compact model was not the source of the slowdown in the decay. These observations point to training error as the most plausible explanation. Consequently, we continued to use the small architecture adopted in our main experiments, both for consistency and because it facilitates fair comparison across settings. Despite the flattening, the final recovery error remains sufficiently small to be practically useful, showing that even with a relatively small dataset and a simple network architecture, the model achieves reliable and accurate recovery.
}

However, this favorable performance is observed only in the idealized setting where the target function \( g_{\mathrm{true}}(\vx) \) is known exactly and the synthetic weights \( w_{ij} \) are generated without measurement noise. In contrast, when applying the same model and training procedure to real data, we observed noticeably higher final losses and reduced recovery accuracy. This discrepancy can likely be attributed to several sources of error inherent in the real-world setting. First, the observed connectivity values \( w_{ij} \) in empirical data are noisy measurements, often affected by scanner artifacts, preprocessing variability \cite{murphy2013resting}. Second, the spatial positions used in our framework are based on parcellation centroids, which only coarsely approximate the true geometric layout of cortical regions. Finally, our approximation of the line integral via discrete interpolation along straight-line paths introduces additional numerical error, particularly when the underlying diffusivity field is not perfectly aligned with Euclidean geodesics.

Taken together, these factors help explain the performance gap between idealized and empirical recovery. While the model is demonstrably capable of learning smooth, radially structured scalar fields from synthetic data, future work will require more refined representations, noise-aware formulations, and possibly model architectures with greater capacity to address the complexities of real brain connectivity data.

\subsection{Computational Approach to the PDE Model}
To numerically solve the system \eqref{eq:model}, we adopt the finite element method (FEM), a widely used approach for solving partial differential equations (PDEs) on complex geometries such as the brain-shaped domain $\Omega$.

We begin by deriving the weak (variational) formulation of the problem. Let $V := H^1(\Omega)$ be the Sobolev space of square-integrable functions with square-integrable first derivatives. Multiplying the PDE by a test function $v \in V$ and integrating over the domain, we obtain:
\begin{equation}\label{eq:weak_form}
\int_{\Omega} u_t v \, d\vx + \int_{\Omega} D(\vx) \nabla u \cdot \nabla v \, d\vx = \int_{\Omega} C(1 - u) u v \, d\vx,
\end{equation}

We discretize the spatial domain using a conforming finite element space \( V_h \subset H^1(\Omega) \), where \( h > 0 \) denotes the maximum diameter of elements in a shape-regular triangulation \( \mathcal{T}_h \) of \( \Omega \). The space \( V_h \) consists of continuous, piecewise linear functions (i.e., polynomials of degree one) defined over \( \mathcal{T}_h \). Let \( \{ \phi_i \}_{i=1}^{N_h} \) be a basis of \( V_h \), where \( N_h \) is the number of degrees of freedom. The discrete solution \( u^n \in V_h \) at each time step is then sought in this space.

This spatial discretization reduces the original PDE to a finite-dimensional variational problem at each time step. When combined with the semi-implicit time discretization described earlier, it results in a fully discrete scheme that is efficient and stable for solving reaction-diffusion equations.

For time discretization of the weak formulation \eqref{eq:weak_form}, we divide the interval \( [0, T] \) into \( N \) uniform steps of size \( \Delta t \), and denote by \( u^n \in V_h \) the finite element approximation at time \( t_n = n \Delta t \). A semi-implicit time-stepping scheme is employed: the linear diffusion term is treated implicitly, while the nonlinear reaction term is evaluated explicitly at the previous time step to avoid solving a fully nonlinear system.

More precisely, for each \( n \ge 0 \), we seek \( u^{n+1} \in V_h \) such that for all \( v \in V_h \),
\[
\int_{\Omega} u^{n+1} v \, d\vx + \Delta t \int_{\Omega} D(\vx) \nabla u^{n+1} \cdot \nabla v \, d\vx
= \int_{\Omega} u^n v \, d\vx + \Delta t \int_{\Omega} C u^n (1 - u^n) v \, d\vx.
\]

This semi-implicit formulation improves computational efficiency by linearizing the nonlinear reaction term, while maintaining stability through the implicit treatment of the diffusion term.

Using the previously obtained diffusion coefficient \( D(x) \) and the initial condition \( u_0 \), 
we solve the PDE \eqref{eq:model} and analyze the effect of spatially varying diffusion. Figure~\ref{fig:results0} shows the time evolution of the integral difference over the boundary region between the solutions obtained using the learned \( D(x) \) and a constant diffusion coefficient \( \overline{D} = \min (D(x)) \), under the case \( C = 0 \) and \( C = 0.02 \). The quantity plotted is:
\[
\int_{\partial \Omega} \delta u(x,t) \, dS = \int_{\partial \Omega} u_{\overline{D}}(x,t)-u_{D(x)}(x,t) \, dS.
\]

We chose to compute the integral difference only over the boundary region because the majority of the training data used to learn the diffusion coefficient \( D(x) \) is concentrated near the boundary of the domain. As we move toward the interior of the domain, the available information becomes increasingly sparse, making the learned \( D(x) \) less reliable in those regions. Therefore, to ensure a meaningful and fair comparison between the solutions computed using the learned and constant diffusion coefficients, we restrict the evaluation of the integral to the boundary.

Furthermore, Figure~\ref{fig:results0} visualizes the brain image at the time point when this difference \( \int_{\partial \Omega} \delta u(x,t) \, dS \) reaches its maximum, highlighting spatial effects due to the heterogeneity in \( D(x) \).

\begin{figure}
    \centering
    \begin{overpic}[width=0.39\linewidth]{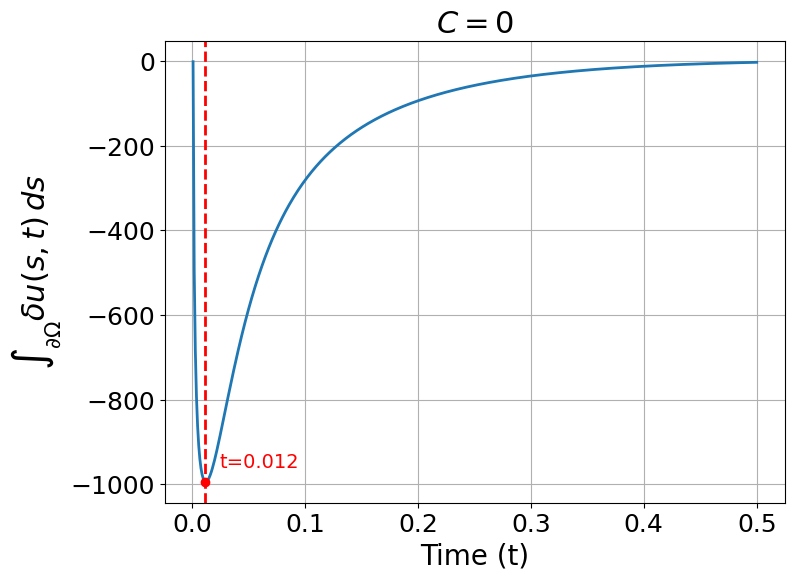}
    \end{overpic}
    \begin{overpic}[width=0.29\linewidth]{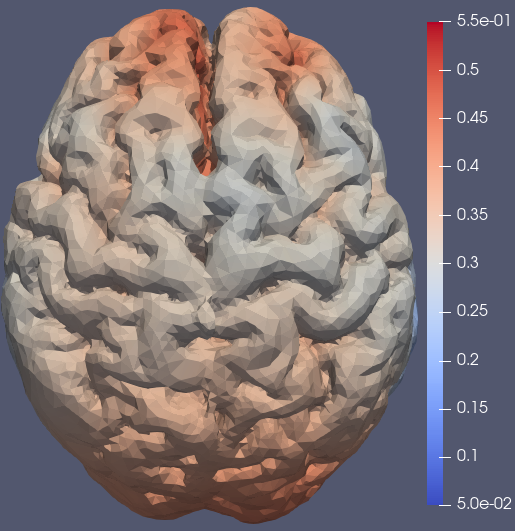}
    \put(50,106){\makebox(0,0){\textbf{\small $t=0.012, D(\vx) = \frac{1}{g_\theta(\vx)^{d+2}}$}}}
    \end{overpic}
    \begin{overpic}[width=0.29\linewidth]{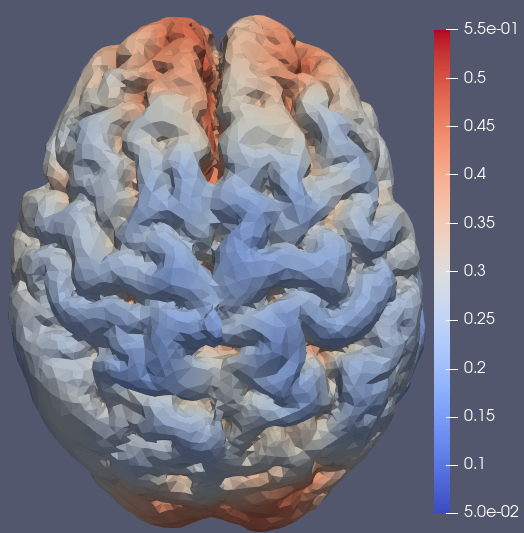}
    \put(50,107){\makebox(0,0){\textbf{\small $\overline{D} = \min\left(\frac{1}{g_\theta(\vx)^{d+2}}\right)$}}}
    \end{overpic}
\par\vspace{0.8cm}
    \begin{overpic}[width=0.38\linewidth]{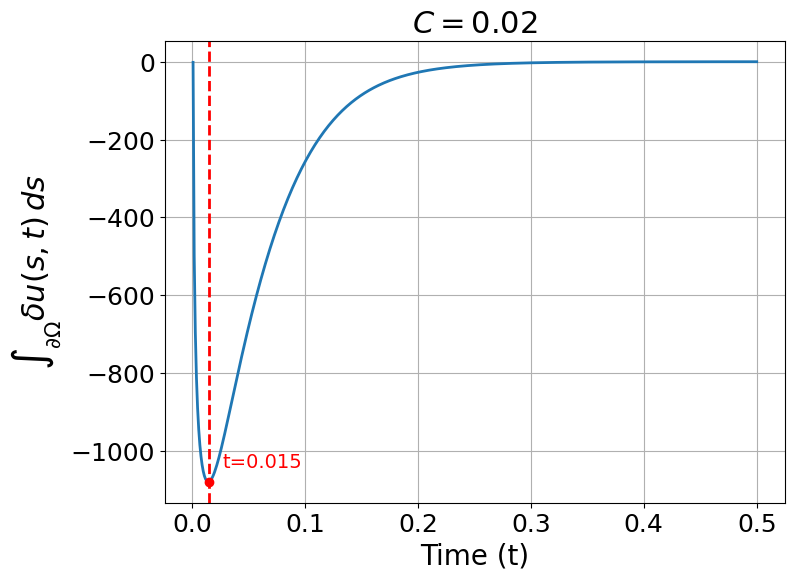}
    \end{overpic}
    \begin{overpic}[width=0.29\linewidth]{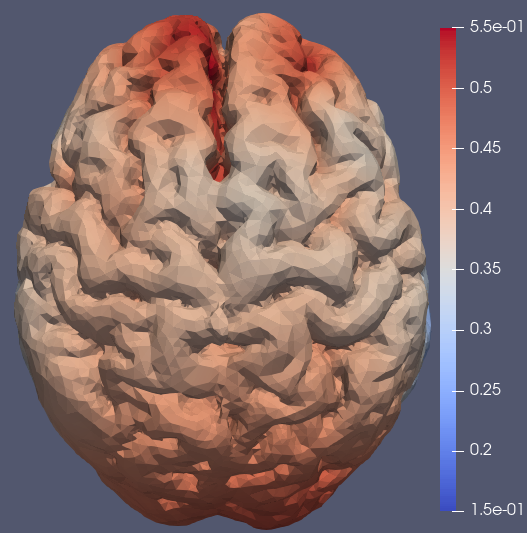}
    \put(50,106){\makebox(0,0){\textbf{\small $t=0.015, D(\vx) = \frac{1}{g_\theta(\vx)^{d+2}}$}}}
    \end{overpic}
    \begin{overpic}[width=0.29\linewidth]{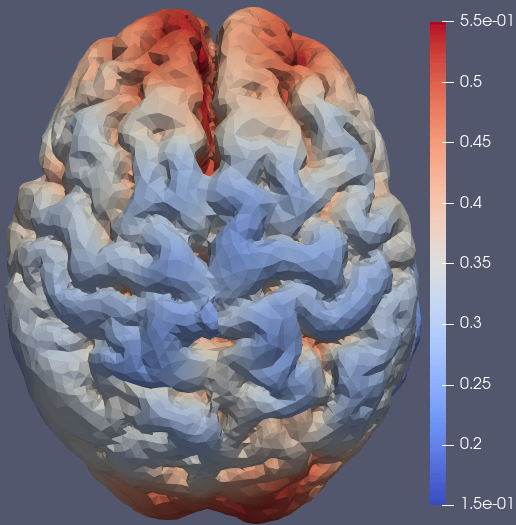}
    \put(50,107){\makebox(0,0){\textbf{\small $\overline{D} = \min\left(\frac{1}{g_\theta(\vx)^{d+2}}\right)$}}}
    \end{overpic}
    \caption{Left: Time evolution of the difference in the boundary integral between the solutions computed using the trained diffusion coefficient \( D(x) \) and a constant diffusion coefficient. 
Middle: $u_{D(x)}$ at the time point when this difference is maximized, using the trained \( D(x) \).
Right: $u_{\overline{D}}$ at the same time point using the constant diffusion coefficient.
The constant value used for \( D(x) \) corresponds to the minimum value of the trained diffusion coefficient, i.e., \( \overline{D} = \min(D(x)) \).}
    \label{fig:results0}
\end{figure}

\section{Conclusion}

In this paper, we derived a continuum model for the $\varepsilon$-graph with a general connectivity functional via an energy-based approach. We proved that the energies of the discrete $\varepsilon$-graph and its continuum limit agree up to an error of order $\mathcal{O}(\varepsilon)$. Importantly, the constant in this estimate depends only on the $W^{1,1}$-norm of the connectivity density, so our result remains valid even when the density exhibits large local variations. We then applied the continuum model to brain dynamics and showed that, by introducing a spatially varying diffusion coefficient rather than a constant one, our model more accurately captures connectivity-driven effects than the classical formulation at the onset of the dynamics.

Looking ahead, we identify two main directions for future research. First, we aim to strengthen the theoretical framework by establishing convergence of the minimizers themselves—demonstrating that the discrete and continuum solutions are close, rather than only their associated energies. Proving such stability estimates is a challenging problem that will likely require new analytical techniques. Second, we plan to extend our methodology to systems with nonlocal interactions or anisotropic structures. In these cases, the continuum limit is expected to involve nonlocal PDEs or higher-order analysis incorporating directional information, and determining their precise form and properties remains an important open challenge.

\bibliographystyle{siamplain}
\bibliography{references}
\appendix
\section{Proof of Proposition \ref{small path}}

{\blue 

\begin{lemma}[\cite{bungert2023uniform}, Proposition 5.1]\label{geodesic distance}
  Suppose that $\Omega$ has a $C^{1,1}$ boundary, then for the geodesic distance \[
d_\Omega(\vx,\vy)=\inf_{\substack{\gamma(0)=\vx, \\ \gamma(1)=\vy, \\ \gamma \in C^1([0,1];\Omega)}}
\int_0^1 |\gamma'(t)|\,\D  t,
\] we have \begin{equation}
d_\Omega(\vx,\vy) \leq |\vx-\vy| + B|\vx-\vy|^{2},\notag
\end{equation}
for $|\vx-\vy|\leq r_\Omega$, where, $r_\Omega$ depends only on the domain, and $B$ is a constant independent of $\vx,\vy$. 
\end{lemma}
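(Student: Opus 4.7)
The lemma is attributed to \cite[Proposition 5.1]{bungert2023uniform}, so the cleanest route is simply to cite that result. For a self-contained sketch, my plan is to exploit the uniform interior ball condition that follows from $\partial\Omega \in C^{1,1}$: there exists $r_0 > 0$ such that at every $\vp \in \partial\Omega$, a ball of radius $r_0$ lies in $\Omega$ and is tangent to $\partial\Omega$ at $\vp$. I would take $r_\Omega$ to be a fixed fraction of $r_0$.

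The construction is explicit. Given $\vx, \vy \in \Omega$ with $|\vx-\vy| \le r_\Omega$, examine the straight segment $[\vx,\vy]$. If it already lies in $\bar\Omega$, then $d_\Omega(\vx,\vy) \le |\vx-\vy|$ and the claim holds trivially with $B = 0$. Otherwise the segment has some connected excursions outside $\Omega$, and I would replace each exterior piece by an arc of an interior ball of radius $r_0$ tangent to $\partial\Omega$ at the exit point. The interior ball condition forces $\partial\Omega$ to have exterior curvature bounded by $1/r_0$ wherever it crosses the chord, so each excursion is confined to a spherical cap of sagitta at most $\ell_i^2/(8 r_0)$, where $\ell_i$ is the length of the chord spanning that excursion.

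A direct geometric calculation shows that an arc of a circle of radius $r_0$ subtending a chord of length $\ell_i$ has length $2 r_0 \arcsin(\ell_i/(2 r_0)) \le \ell_i + \ell_i^3/(24 r_0^2)$. Summing across all excursions and using $\sum_i \ell_i \le |\vx-\vy|$ together with $\max_i \ell_i \le |\vx-\vy| \le r_\Omega$, the total additional length is controlled by $B |\vx-\vy|^2$ with $B$ depending only on $r_0$, and hence only on $\Omega$.

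The main obstacle is making the global construction rigorous when the segment has many small excursions out of $\Omega$, and in ensuring that the patched path is injective and actually lies in $\bar\Omega$ when the excursions overlap or accumulate near $\partial\Omega$. The cleanest technical device is to parametrize $\partial\Omega$ locally as the graph of a $C^{1,1}$ function, carry out the arc-replacement in those coordinates, and then globalize via a tubular-neighborhood/compactness argument with uniform constants. Since \cite{bungert2023uniform} executes this analysis in detail, for the paper the most economical option is to quote their estimate directly.
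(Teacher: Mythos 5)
Your primary route---quoting \cite[Proposition 5.1]{bungert2023uniform} directly---is exactly what the paper does: the lemma is stated as a citation and no independent proof is given in the paper. Your geometric sketch via the uniform interior/exterior ball condition is a reasonable outline of how such an estimate is established (with the acknowledged technical work of handling accumulating excursions and carrying out the arc replacement in $C^{1,1}$ graph coordinates), but it is supplementary to the citation, which suffices here.
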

By Assumption~\ref{assump:connectivity_functional} we have, for all $\vx,\vy\in\Omega$,
\[
  \underline{g}\,d_{\Omega}(\vx,\vy)\ \le\ d_{g}(\vx,\vy)\ \le\ \overline{g}\,d_{\Omega}(\vx,\vy).
\]
Thus $d_g$ and $d_\Omega$ are bi-Lipschitz equivalent; in particular, if $(\Omega,d_\Omega)$ is complete (e.g., for bounded Lipschitz domains with the intrinsic metric), then $(\Omega,d_g)$ is also complete. By the Hopf–Rinow theorem for length spaces \cite{burago2001course}, for any $\vx,\vy\in\Omega$ there exists a minimizing Lipschitz curve $\gamma:[0,1]\to\Omega$ joining $\vx$ to $\vy$ satisfies
\[
   \int_0^1 g\bigl(\gamma(t)\bigr)\,|\gamma'(t)|\,dt
  \;=\; d_g(\vx,\vy).
\]
In what follows we use this fact and do not further discuss existence issues for minimizing paths.}




\begin{lemma}\label{two bound}
For any two distinct points {\blue \(\vx,\vy\in B_{r_\Omega}(\vx)\cap \Omega\)}, if the radius \(r\) is chosen sufficiently large so that 
\begin{equation}\label{eq:ratio_condition}
\frac{\bar{g}}{\underline{g}} < \frac{2r}{|\vx-\vy|+B|\vx-\vy|^2}-1,
\end{equation}
then any optimal weighted path \(\gamma_{\vx,\vy}:[0,1]\to\mathbb{R}^n\) connecting \(\vx\) and \(\vy\) (i.e., with \(d(\vx,\vy)=\int_0^1g(\gamma_{\vx,\vy}(t))|\gamma_{\vx,\vy}(t)|\D t\)) remains entirely within \(B_r(\vx)\), {\blue where $B$ and $r_\Omega$ are defined in Lemma \ref{geodesic distance}.}
\end{lemma}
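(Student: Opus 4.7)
The plan is to proceed by contradiction: assume the optimal curve $\gamma_{\vx,\vy}$ leaves the ball $B_r(\vx)$, then bound its weighted length from below using only the uniform lower bound $\underline{g}$ on $g$, and compare with an upper bound on $d_g(\vx,\vy)$ obtained from the ``almost straight'' competitor furnished by Lemma \ref{geodesic distance}. The ratio condition \eqref{eq:ratio_condition} is tailored exactly so that these two one-sided estimates become incompatible.

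Concretely, the first step is to extract, from the continuity of $\gamma_{\vx,\vy}$ together with the boundary data $\gamma(0)=\vx$ and $\gamma(1)=\vy$, a crossing time $t^\star\in(0,1)$ with $|\gamma_{\vx,\vy}(t^\star)-\vx|\ge r$. The Euclidean arclength of $\gamma_{\vx,\vy}$ on $[0,t^\star]$ is then at least $r$, and on $[t^\star,1]$ at least $r-|\vx-\vy|$ by the reverse triangle inequality. Multiplying by $\underline{g}$ and using that $\gamma_{\vx,\vy}$ realizes $d_g(\vx,\vy)$ yields the lower bound
\[
d_g(\vx,\vy)\;\ge\;\underline{g}\bigl(2r-|\vx-\vy|\bigr).
\]

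For the complementary upper bound, I would invoke Lemma \ref{geodesic distance} (using the $C^{1,1}$ regularity of $\partial\Omega$ and the assumption $|\vx-\vy|\le r_\Omega$) to produce a curve in $\Omega$ from $\vx$ to $\vy$ of Euclidean length at most $|\vx-\vy|+B|\vx-\vy|^2$; weighting this competitor pointwise by $\bar g$ gives $d_g(\vx,\vy)\le\bar g\,(|\vx-\vy|+B|\vx-\vy|^2)$. Chaining the two inequalities and dividing by $\underline{g}\,(|\vx-\vy|+B|\vx-\vy|^2)$ produces
\[
\frac{\bar g}{\underline{g}}\;\ge\;\frac{2r}{|\vx-\vy|+B|\vx-\vy|^2}-1,
\]
in direct contradiction with hypothesis \eqref{eq:ratio_condition}, so the assumption that $\gamma_{\vx,\vy}$ exits $B_r(\vx)$ must fail.

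I do not anticipate any serious technical obstacle; the argument is a standard two-sided length comparison. The only points requiring brief justification are the existence of the minimizing path $\gamma_{\vx,\vy}$ and of the crossing time $t^\star$: the former follows from the Hopf--Rinow statement recorded in the paragraph preceding the lemma, and the latter from the intermediate value theorem applied to $t\mapsto|\gamma_{\vx,\vy}(t)-\vx|$, which is continuous and equals $0$ at $t=0$.
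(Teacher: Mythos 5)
Your proposal is correct and follows essentially the same argument as the paper: a contradiction via the two-sided comparison $\underline{g}\,(2r-|\vx-\vy|)\le d_g(\vx,\vy)\le \bar g\,(|\vx-\vy|+B|\vx-\vy|^2)$, with the lower bound from the Euclidean lengths of the entering and exiting portions of the path and the upper bound from Lemma \ref{geodesic distance}. The only cosmetic difference is that you use a single crossing time and the reverse triangle inequality, whereas the paper splits at the first exit time and last exit time, which yields the same estimates.
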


\begin{proof}
Suppose by way of contradiction that an optimal path $\gamma_{\vx,\vy}:[0,1]\to\mathbb{R}^n$, with $\gamma_{\vx,\vy}(0)=\vx$ and $\gamma_{\vx,\vy}(1)=\vy$, does \emph{not} remain entirely within $B_r(\vx)$. Define
\[
t_* := \inf\{t\in [0,1] : \gamma_{\vx,\vy}(t) \notin B_r(\vx)\}
\]
and
\[
s_* := \sup\{t\in [0,1] : \gamma_{\vx,\vy}(t) \notin B_r(\vx)\}.
\]
Then the portions of $\gamma_{\vx,\vy}$ on the intervals $[0,t_*]$ and $[s_*,1]$ lie completely within $B_r(\vx)$.

Since $\gamma_{\vx,\vy}(0)=\vx$ and $\gamma_{\vx,\vy}(t_*)\in \partial B_r(\vx)$, the Euclidean length of $\gamma_{\vx,\vy}|_{[0,t_*]}$ satisfies
\[
\int_0^{t_*} |\gamma_{\vx,\vy}'(t)|\,\D  t \ge r.
\]
Similarly, noting that $\vy\in B_r(\vx)$ based on \[1\le\frac{\bar{g}}{\underline{g}} < \frac{2r}{|\vx-\vy|+B|\vx-\vy|^2}-1\Rightarrow r>|\vx-\vy|,\] and $\gamma_{\vx,\vy}(s_*)\in\partial B_r(\vx)$, we have
\[
\int_{s_*}^1 |\gamma_{\vx,\vy}'(t)|\,\D  t \ge r-|\vx-\vy|.
\]
Thus, the total Euclidean length of these two segments is at least
\[
r + \bigl(r-|\vx-\vy|\bigr)=2r-|\vx-\vy|.
\]

Since $g(\vz)\ge \underline{g}$ for all $\vz\in B_r(\vx)\cap\Omega\subset \Omega$, the weighted length along these segments is bounded below by
\[
{\blue d_g(\vx,\vy)} \ge \underline{g}\left(\int_0^{t_*}|\gamma_{\vx,\vy}'(t)|\,\D  t + \int_{s_*}^1 |\gamma_{\vx,\vy}'(t)|\,\D  t\right) \ge \underline{g}(2r-|\vx-\vy|).
\]
On the other hand, {\blue based on $|\vx-\vy|\le r_\Omega$ and Lemma \ref{geodesic distance}, we have}
\[
d_g(\vx,\vy) \le \bar{g}\cdot d_\Omega(\vx,\vy)\le \bar{g}(|\vx-\vy|+B|\vx-\vy|^2).
\]
Thus,
\[
\underline{g}(2r-|\vx-\vy|) \le \bar{g}(|\vx-\vy|+B|\vx-\vy|^2),
\]
or equivalently,
\[
\frac{\bar{g}}{\underline{g}} \ge \frac{2r-|\vx-\vy|}{|\vx-\vy|+B|\vx-\vy|^2}\ge \frac{2r}{|\vx-\vy|+B|\vx-\vy|^2}-1,
\]
which contradicts the assumption \eqref{eq:ratio_condition}.
\end{proof}

\begin{lemma}\label{small ggap}
{\blue Suppose that $\Omega$ has a $C^{1,1}$ boundary, and $g\in W^{1,\infty}(\Omega)$,} and Assumption \ref{assump:connectivity_functional} holds. For any \(r>0\) with $ r< \min\{r_\Omega/2, 1/ B\}$, 
we have
\begin{align}\label{eq:small_path2}
\sup_{\vy,\vz\in \Omega\cap B_r(\vx)} |g(\vz)-g(\vy)| \le 6r\sup_{\va\in\Omega\cap B_{4 r}(\vx)}|\nabla g(\va)| \notag
\end{align}
for any $\vx\in\Omega$, where $B$ and $r_\Omega$ are defined in Lemma \ref{geodesic distance}.
\end{lemma}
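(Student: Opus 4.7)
The plan is to connect $\vy$ and $\vz$ by a near-geodesic path $\gamma$ inside $\Omega$ and then bound $|g(\vy)-g(\vz)|$ by integrating $|\nabla g|$ along $\gamma$. Two properties of $\gamma$ are needed: its Euclidean length should be at most $6r$, and its image should lie in the slightly larger ball $B_{4r}(\vx)\cap\Omega$, so that the supremum of $|\nabla g|$ on the right-hand side controls $|\nabla g|$ along the whole path.

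The length bound is immediate from Lemma~\ref{geodesic distance}: since $r<r_\Omega/2$ we have $|\vy-\vz|\le 2r\le r_\Omega$, so there is a path in $\Omega$ from $\vy$ to $\vz$ whose length $L$ satisfies $L\le |\vy-\vz|+B|\vy-\vz|^2\le 2r+4Br^2$, and the hypothesis $r<1/B$ gives $L<6r$. This is exactly where the factor $6r$ in the conclusion comes from.

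The main obstacle is ensuring $\gamma\subset B_{4r}(\vx)$, because $B_r(\vx)\cap\Omega$ can be nonconvex when $\vx$ lies close to $\partial\Omega$, so a straight-line containment argument is unavailable. I would handle this with a midpoint-of-chord estimate: for any $\va=\gamma(t)$, splitting $\gamma$ at $\va$ bounds the Euclidean distances $|\va-\vy|$ and $|\va-\vz|$ by the lengths of the two subarcs, giving $|\va-\vy|+|\va-\vz|\le L$. Writing $\vm=(\vy+\vz)/2$ and using convexity of the Euclidean norm, $|\va-\vm|\le(|\va-\vy|+|\va-\vz|)/2\le L/2<3r$. Since $\vm$ lies in the convex Euclidean ball $B_r(\vx)$, the triangle inequality yields $|\va-\vx|<4r$, hence $\gamma\subset\Omega\cap B_{4r}(\vx)$.

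Finally, because $g\in W^{1,\infty}(\Omega)$ is Lipschitz, a standard mollification / absolutely-continuous-on-lines argument gives the path integral estimate $|g(\vz)-g(\vy)|\le L\,\sup_{\va\in\Omega\cap B_{4r}(\vx)}|\nabla g(\va)|$. Combined with $L<6r$ and taking the supremum over $\vy,\vz\in\Omega\cap B_r(\vx)$, this produces the claimed inequality. A minor cleanup point is that Lemma~\ref{geodesic distance} only furnishes the infimum, so strictly speaking one works with an $\varepsilon$-almost minimizer and passes to the limit, but the containment argument above is uniform in $\varepsilon$ for $\varepsilon$ small enough, so this causes no difficulty.
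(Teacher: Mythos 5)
Your proposal is correct and follows essentially the same route as the paper: a (near-)geodesic from $\vy$ to $\vz$ whose length is bounded by $2r+4Br^2\le 6r$ via Lemma~\ref{geodesic distance}, containment of the path in $\Omega\cap B_{4r}(\vx)$ from the arc-length/triangle-inequality estimate, and then integration of $\nabla g$ along the path using the Lipschitz regularity of $g$. The only cosmetic differences are your midpoint-of-chord bound $|\va-\vm|\le L/2$ (the paper instead bounds the distance to the nearer endpoint by $L/2$, giving the same $4r$) and your explicit $\varepsilon$-almost-minimizer handling of existence, which the paper sidesteps by invoking existence of minimizing curves.
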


\begin{proof}
    For any two points \(\vy,\vz\in \Omega\cap B_{r}(\vx)\), let
\(\gamma_{\Omega,\vz,\vy}\) be a geodesic
minimiser joining \(\vz\) to \(\vy\) to make \[ d_{\Omega}(\vz,\vy)=\int_{0}^{1}|\gamma_{\Omega,\vz,\vy}'(t)|\,\D t.\]
Since \(g\in W^{1,\infty}(\Omega)\) is Lipschitz,
the composition \(t\mapsto g\bigl(\gamma_{\Omega,\vz,\vy}(t)\bigr)\) is
Lipschitz on \([0,1]\) and hence {\blue we have}
\[
  g(\vz)-g(\vy)
  \;=\;\int_{0}^{1}
     (g\circ \gamma_{\Omega,\vz,\vy})'(t)
     \,\D t=\;
  \int_{0}^{1}
     \nabla g\bigl(\gamma_{\Omega,\vz,\vy}(t)\bigr)\cdot
     \gamma_{\Omega,\vz,\vy}'(t)
     \,\D t.
\] Based on $|\vz-\vy|\le 2r\le r_\Omega$, we have that 
\[\int_0^1 |\gamma_{\Omega,\vz,\vy}'(t)|\D t\le |\vz-\vy|+B|\vz-\vy|^2\le 2r+4Br^2\le 6r\]
due to Lemma \ref{geodesic distance} and $r\le 1/B$. 
Furthermore, we also know that $\gamma_{\Omega,\vz,\vy}(t)$ remains entirely within \(B_{4 r}(\vx)\) due to the fact that \[\underbrace{\frac12\int_{0}^{1}\!\bigl|\gamma'_{\Omega,\vz,\vy}(t)\bigr|\,dt}_{\text{maximum distance the path can exit }B_{r}(\vx)}
+r\le 4r.\] Then for any two points $\vz,\vy$ in $B_r(\vx)$, we have that
\begin{equation}
    g(\vz)-g(\vy)\le\sup_{\va\in\Omega\cap B_{4 r}(\vx)}|\nabla g(\va)|\cdot \int_0^1 |\gamma_{\Omega,\vz,\vy}'(t)|\D t\le 6r\sup_{\va\in\Omega\cap B_{4 r}(\vx)}|\nabla g(\va)|,\label{important}
\end{equation}
taking the supremum yields the desired bound.  The term
\[
     \sup_{\va\in\Omega\cap B_{4 r}(\vx)}|\nabla g(\va)|
\]
is finite due to $g\in W^{1,\infty}(\Omega)$.
\end{proof}

Now we can finish the proof of Proposition \ref{small path}.
\begin{proof}[Proof of Proposition \ref{small path}]
For any $\varepsilon>0$, we set \(r=\lambda\eps=\Bigl(\frac{\bar{g}}{\lgg}+1\Bigr)\varepsilon.\) By Lemmas~\ref{geodesic distance} and \ref{two bound}, for any \(\vx,\vy\in\Omega\) with $|\vx-\vy|\le \varepsilon$ we have
\begin{equation}\label{eq:distance_bounds}
\underline{g}_r(\vx)\,|\vx-\vy| \le d_g(\vx,\vy) \le \bar{g}_r(\vx)(|\vx-\vy|+B|\vx-\vy|^2),
\end{equation}
where
\[
\bar{g}_r(\vx) := \sup_{y\in B_r(\vx)} g(y)
\quad \text{and} \quad
\underline{g}_r(\vx) := \inf_{y\in B_r(\vx)} g(y).
\]
This inequality is valid due to 
\[
r=\Bigl(\frac{\bar{g}}{\lgg}+1\Bigr)\varepsilon \Rightarrow \frac{\bar{g}}{\lgg} <\frac{2r}{|\vx-\vy|+B|\vx-\vy|^2}-1,
\]for $|\vx-\vy|\le \varepsilon<1/B$.

Now we show an absolute bound for $d_g(\vx,\vy)-g(\vx)|\vx-\vy|$ when $d(\vx,\vy)\le \varepsilon$. The upper bound can be found by \begin{align}
    d_g(\vx,\vy)-g(\vx)|\vx-\vy|\le (\bar{g}_r(\vx)-g(\vx))\,|\vx-\vy|+B\bar{g}_r(\vx)|\vx-\vy|^2.\notag
\end{align} Due to $|\vx-\vy|\le \frac{d_g(\vx,\vy)}{\lgg}\le \frac{\varepsilon}{\lgg}$, we have \begin{align}
    d_g(\vx,\vy)-g(\vx)|\vx-\vy|\le (\bar{g}_r(\vx)-g(\vx))\,\frac{\varepsilon}{\lgg}+B\bar{g}\frac{\varepsilon^2}{\lgg^2}.\notag
 \end{align} 
Furthermore, based on Lemma \ref{small ggap}, one can estimate
\[
  \bigl|\bar{g}_r(\vx) - g(\vx)\bigr|
  \;\le\; 6r \sup_{\va\in \Omega \cap B_{4\lambda\varepsilon}(\vx)} |\nabla g(\va)|.
\]
Substituting $r = \lambda\varepsilon$ leads to
\[
  \bigl|\bar{g}_r(\vx) - g(\vx)\bigr|
  \;\le\; 6\lambda\varepsilon \sup_{\va\in \Omega \cap B_{4\lambda\varepsilon}(\vx)} |\nabla g(\va)|.
\]
Therefore, the upper bound of $d_g(\vx,\vy) - g(\vx)\,|\vx-\vy|$ is
\[
  \frac{\varepsilon^{2}}{\lgg^{2}}
  \left[\,6\lambda \sup_{\va\in \Omega \cap B_{4\lambda\varepsilon/\lgg}(\vx)} |\nabla g(\va)|
  \;+\; B\bar g \right].
\]

Similarly, for the lower bound, we have
\[
  d_g(\vx,\vy) - g(\vx)\,|\vx-\vy|
  \;\ge\; \bigl(\lgg_r(\vx) - g(\vx)\bigr)|\vx-\vy|
  \;\ge\; -\frac{6\lambda \varepsilon^{2}}{\lgg^{2}}
     \sup_{\va\in \Omega \cap B_{4\lambda\varepsilon/\lgg}(\vx)} |\nabla g(\va)|.
\]
Thus we obtain the desired two–sided bounds.
\end{proof}

\section{Proof of Theorem \ref{main1}}
\begin{lemma}[Bernstein for U-statistics \cite{arcones1995bernstein}]\label{lem:bernstein}
Let \(X_1, \ldots, X_n\) be i.i.d.\ random variables taking values in \(\fX\) and let \(f:\fX^2 \rightarrow \mathbb{R}\) be a symmetric function (i.e., \(f(x,y)=f(y,x)\)). Define
\[
\mu = \mathbb{E}\bigl[f(X_i, X_j)\bigr], \quad \sigma^2 = \operatorname{Var}\bigl(f(X_i, X_j)\bigr) = \mathbb{E}\Bigl[\Bigl(f(X_i, X_j)-\mu\Bigr)^2\Bigr],
\]
and let
\[
b = \|f\|_{\infty}.
\]
Define the U-statistic
\begin{equation}\label{eq:U_statistic}
    U_n = \frac{1}{n(n-1)} \sum_{i \neq j} f\bigl(X_i, X_j\bigr).
\end{equation}
Then, for every \(t>0\),
\begin{equation}\label{eq:bernstein}
    \mathbb{P}\Bigl(U_n - \mu \geq t\Bigr) \leq \exp\left(-\frac{n t^2}{6\left(\sigma^2 + \frac{1}{3} b t\right)}\right).
\end{equation}
\end{lemma}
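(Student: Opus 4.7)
The plan is to prove this Bernstein-type inequality via Hoeffding's classical combinatorial decomposition, which reduces the U-statistic to an average (over permutations) of sums of independent bounded random variables, to which the standard scalar Bernstein inequality applies.

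\textbf{Step 1: Centering.} First I would reduce to the zero-mean case by setting $\bar f(x,y) := f(x,y) - \mu$, so that $U_n - \mu = \frac{1}{n(n-1)} \sum_{i\neq j} \bar f(X_i,X_j)$, with $\mathbb{E}[\bar f(X_i,X_j)]=0$, $\|\bar f\|_\infty \leq 2b$, and $\mathrm{Var}(\bar f(X_i,X_j)) = \sigma^2$. This lets us focus on a one-sided deviation of a centered U-statistic.

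\textbf{Step 2: Hoeffding's averaging identity.} For $m := \lfloor n/2 \rfloor$ and each permutation $\pi$ of $\{1,\ldots,n\}$, define
\[
W_\pi \;:=\; \frac{1}{m} \sum_{k=1}^m \bar f\bigl(X_{\pi(2k-1)}, X_{\pi(2k)}\bigr).
\]
By the symmetry of $\bar f$ and the exchangeability of the $X_i$'s, averaging $W_\pi$ uniformly over all permutations of $\{1,\ldots,n\}$ recovers $U_n - \mu$ exactly (with a harmless $1/n$ correction absorbed into constants when $n$ is odd). The key structural gain is that for each \emph{fixed} $\pi$, the $m$ summands inside $W_\pi$ involve disjoint pairs of the $X_i$'s and are therefore independent.

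\textbf{Step 3: Convexification via Jensen.} Applying Markov's inequality to the exponential moment and then Jensen's inequality to the convex map $x \mapsto e^{\lambda x}$, I would push the average over permutations outside the expectation to obtain
\[
\mathbb{P}(U_n - \mu \geq t) \;\leq\; e^{-\lambda t}\, \mathbb{E}\!\left[\exp\!\Bigl(\tfrac{\lambda}{n!} \textstyle\sum_\pi W_\pi\Bigr)\right] \;\leq\; e^{-\lambda t}\, \mathbb{E}\bigl[e^{\lambda W_{\pi_0}}\bigr]
\]
for an arbitrary fixed reference permutation $\pi_0$, thereby eliminating the dependence between the U-statistic summands.

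\textbf{Step 4: Scalar Bernstein and optimization.} Since $W_{\pi_0}$ is an average of $m \asymp n/2$ independent centered bounded random variables with variance $\sigma^2$ and uniform bound $2b$, the standard log-MGF estimate (based on $e^x - 1 - x \leq \tfrac{x^2/2}{1 - |x|/3}$ for $|x| < 3$) together with optimization in $\lambda$ yields a Bernstein bound of the form $\exp\!\bigl(-\frac{m t^2}{2\sigma^2 + C b t}\bigr)$. Substituting $m = \lfloor n/2 \rfloor$ and tracking constants should reproduce the stated bound with denominator $6(\sigma^2 + bt/3)$.

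\textbf{Main obstacle.} The delicate part is the constant bookkeeping in Step 4: after centering, the uniform bound becomes $2b$ rather than $b$, and the averaging step effectively halves the sample size from $n$ to $\lfloor n/2 \rfloor$. Recovering the precise constants $6$ and $1/3$ in the denominator of the stated exponent therefore requires a careful derivation of the MGF bound and tight optimization in $\lambda$, rather than invoking a textbook Bernstein statement as a black box. A secondary obstacle is verifying the averaging identity cleanly when $n$ is odd, where one has to handle the unpaired index without inflating the variance.
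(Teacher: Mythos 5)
The paper never proves this lemma: it is quoted as an external result from Arcones \cite{arcones1995bernstein} and used as a black box, so there is no internal proof to compare against. Your Hoeffding-decomposition route (centering, exact averaging of block sums over permutations, Jensen to pass to a single block of independent summands, then scalar Bernstein) is the standard and structurally correct way to prove such inequalities; incidentally, the averaging identity is exact for every $n\ge 2$, odd or even, since each ordered pair occupies each block position in exactly $(n-2)!$ permutations, so the ``$1/n$ correction'' you worry about in Step 2 is not needed.

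The genuine gap is precisely the one you flag in Step 4, and as planned it does not close. After centering you only control $\|\bar f\|_\infty\le 2b$, and the block has $m=\lfloor n/2\rfloor$ independent summands, so the optimized Chernoff--Bernstein bound gives $\exp\bigl(-m t^{2}/(2\sigma^{2}+\tfrac{4}{3}bt)\bigr)$. In the regime $bt\gg\sigma^{2}$ this is of order $\exp(-\tfrac{3}{8}\,m t/b)\approx\exp(-\tfrac{3}{16}\,nt/b)$, which is strictly weaker than the stated $\exp\bigl(-nt^{2}/(6\sigma^{2}+2bt)\bigr)\approx\exp(-\tfrac{1}{2}\,nt/b)$; no amount of tight optimization in $\lambda$ recovers the constants $6$ and $\tfrac13$ from the ingredients you list. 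To salvage the argument you must exploit one-sidedness: the Bennett-type MGF bound only needs an almost-sure upper bound on the summands, and $\bar f\le b-\mu$, so when $\mu\ge 0$ (in particular for the nonnegative kernels $f(\vx,\vy)=\eta_\varepsilon\,(u(\vx)-u(\vy))^2/\varepsilon^2$ to which the paper applies the lemma) one may take the range parameter to be $b$ rather than $2b$, and then $m\ge (n-1)/2$ suffices to dominate the stated exponent for $n\ge 3$. For a general bounded symmetric $f$ with possibly negative mean, the statement with these exact constants does not follow from your outline and must either be taken from Arcones' sharper argument or restated with a slightly larger constant in front of $bt$.
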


This inequality is key in showing that the discrete energy \(E_n[u]\) converges to the continuum nonlocal energy \(I_\varepsilon[u]\) as \(n \to \infty\) (with appropriate scaling), thereby linking the discrete graph-based formulations to the continuum energy.

\begin{proof}[Proof of Theorem \ref{main1}]
    Define the U-statistic
\[
U_n=\frac{1}{n(n-1)} \sum_{i \neq j} f\left(\vx_i, \vx_j\right),
\]
where
\[
f(\vx, \vy)=\eta_{\varepsilon}(|\vx-\vy|)\left(\frac{u(\vx)-u(\vy)}{\varepsilon}\right)^2,
\]
and note that $E_n[u]=\frac{n-1}{\sigma_{\eta}n} U_n$, where \begin{equation}
   \eta_{\varepsilon}(|\vx-\vy|):=\frac{\eta\left( \frac{d_g(\vx,\vy)}{\eps}\right)}{\varepsilon^d}.
\end{equation} Due to that $u$ is Lipschitz, $ \eta\le 1$, and \begin{align}
    f(\vx, \vy)\le& \text{Lip}^2(u)|\vx-\vy|^2\varepsilon^{-2-d}\eta\left(\frac{d_\varepsilon(\vx,\vy)}{\varepsilon}\right)\le \text{Lip}^2(u)|\vx-\vy|^2\varepsilon^{-2-d}
\end{align} Otherwise is zero. Now, let us consider the region of $d_g(\vx,\vy)\le \varepsilon$, based on Assumption \ref{assump:connectivity_functional}, we have that \begin{equation}
    \{(\vx,\vy)\mid d_g(\vx,\vy)\le \varepsilon\}\subset \{(\vx,\vy)\mid \underline{g}|\vx-\vy|\le\varepsilon\},
\end{equation}then the support of $\eta_{\varepsilon}(|\vx-\vy|)$ for each $\vx$ is the subset of $B_{ \frac{\varepsilon}{\lgg}}(\vx)$.
We know that \begin{equation}
    \mathbb{E} f(\vx, \vy)=\sigma_\eta I_\varepsilon[u]
\end{equation}and \begin{align}
    \operatorname{Var}f(\vx, \vy)
& \leq \int_\Omega \int_\Omega \eta_{\varepsilon}(|\vx-\vy|)^2\left(\frac{u(\vx)-u(\vy)}{\varepsilon}\right)^4 \rho(\vx) \rho(\vy) \D  \vx \D  \vy \\\notag& \leq \int_\Omega \int_\Omega \eta_{\varepsilon}(|\vx-\vy|)^2\operatorname{Lip}(u)^4\left(\frac{|\vx-\vy|}{\varepsilon} \right)^4\rho(\vx) \rho(\vy) \D  \vx \D  \vy \\\notag
& \leq C \operatorname{Lip}(u)^4 \varepsilon^{-2 d} \int_\Omega \int_{B_{ \frac{\varepsilon}{\lgg}}(\vx)\cap \Omega} \left(\frac{|\vx-\vy|}{\varepsilon} \right)^4\rho(\vx) \rho(\vy)\D  \vx \D  \vy \\\notag
& \leq C(\lgg) \operatorname{Lip}(u)^4 \varepsilon^{-d}.
\end{align}
Applying Bernstein's inequality for U-statistics yields
\[
\mathbb{P}\left(\left|U_n-\sigma_\eta I_{\varepsilon}(u)\right| \geq \operatorname{Lip}(u)^2 \lambda\right) \leq 2 \exp \left(-c n \varepsilon^d \lambda^2\right)
\]
for any $0<\lambda \leq 1$. Since $E_n[u]=\frac{n-1}{n \sigma_\eta} U_n$ we have
\[
\begin{aligned}
\left|E_n[u]-I_{\varepsilon}(u)\right| & =\frac{1}{\sigma_\eta}\left|\left(1-\frac{1}{n}\right) U_n-\sigma_\eta I_{\varepsilon}(u)\right| \\
& =\frac{1}{\sigma_\eta}\left|\left(1-\frac{1}{n}\right)\left(U_n-\sigma_\eta I_{\varepsilon}(u)\right)-\frac{\sigma_\eta}{n} I_{\varepsilon}(u)\right| \\
& \leq \frac{1}{\sigma_\eta}\left|U_n-\sigma_\eta I_{\varepsilon}(u)\right|+\frac{1}{n} I_{\varepsilon}(u) .
\end{aligned}
\]

Since $u$ is Lipschitz we have \begin{align}
    I_\varepsilon[u]=&\frac{1}{\sigma_\eta\varepsilon^d}\int_{\Omega}\int_{\Omega}\eta\left( \frac{d_g(\vx,\vy)}{\eps}\right)\frac{(u(\vx)-u(\vy))^2}{\varepsilon^2}\rho(\vx)\rho(\vy)\D  \vy\D  \vx\notag\\\le &\frac{1}{\sigma_\eta\varepsilon^d}\int_{\Omega}\int_{\Omega}\eta\left( \frac{d_g(\vx,\vy)}{\eps}\right)\frac{\operatorname{Lip}(u)^2|\vx-\vy|^2}{\varepsilon^2}\rho(\vx)\rho(\vy)\D  \vy\D  \vx\notag\\=&\frac{1}{\sigma_\eta\varepsilon^d}\int_{\Omega}\int_{B_{ \frac{\varepsilon}{\lgg}}(\vx)\cap\Omega}\eta\left( \frac{d_g(\vx,\vy)}{\eps}\right)\frac{\operatorname{Lip}(u)^2|\vx-\vy|^2}{\varepsilon^2}\rho(\vx)\rho(\vy)\D  \vy\D  \vx\notag\\\le &\frac{C(\lgg)\operatorname{Lip}^2(u)}{\sigma_\eta}
\end{align} Therefore we can apply the result of Bernstein (Lemma \ref{lem:bernstein}) above to obtain that
$$
\left|E_{n}(u)-I_{\varepsilon}(u)\right| \leq C(\lgg) \operatorname{Lip}(u)^2\left(\delta+\frac{1}{n}\right)
$$
holds with probability at least $1-2 \exp \left(-c n \varepsilon^d \delta^2\right)$, which completes the proof.
\end{proof}

\section{Neural Network Architecture and Training Details}\label{app:nn_details}

The neural network consists of three fully connected (linear) layers with intermediate SiLU (Sigmoid-weighted Linear Unit) activations, followed by a final Softplus activation to ensure non-negativity of the output. The input is first passed through a fully connected (linear) layer that maps the 3 input features to 8 hidden units. This is followed by a SiLU activation function, which introduces nonlinearity while preserving smooth gradients. The output is then passed through a second fully connected layer that also has 8 hidden units, again followed by a SiLU activation. A final linear layer maps the hidden representation to a single scalar output. To ensure the predicted diffusivity is strictly positive—a necessary condition for physical consistency—the final output is passed through a Softplus activation function.

The SiLU activation is defined as \( \mathrm{SiLU}(x) = x \cdot \sigma(x) \), where \( \sigma(x) \) is the sigmoid function. The Softplus activation is given by \( \mathrm{Softplus}(x) = \ln(1 + e^x) \), which guarantees strictly positive outputs.

The choice of this compact architecture was deliberate: when training on the 227 local FC values extracted from 68 brain parcels, more complex networks consistently led to overfitting. Since increasing the dataset size was not possible, we reduced the architecture instead, which allowed us to avoid overfitting and ensure stable generalization.

The neural network is trained using a dataset derived from functional connectivity (FC) values between nearby brain parcels. During training, $k$-fold cross-validation was employed to assess generalizability and robustness of the learned diffusivity field. Figure~\ref{fig:cv-result} shows the average training and validation losses obtained from 5-fold cross-validation using Eq. \eqref{eq:loss_function_modified}.



\renewcommand\thefigure{\thesection.\arabic{figure}} 
\setcounter{figure}{0} 
\begin{figure}[ht]
\centering
\includegraphics[width=0.85\textwidth]{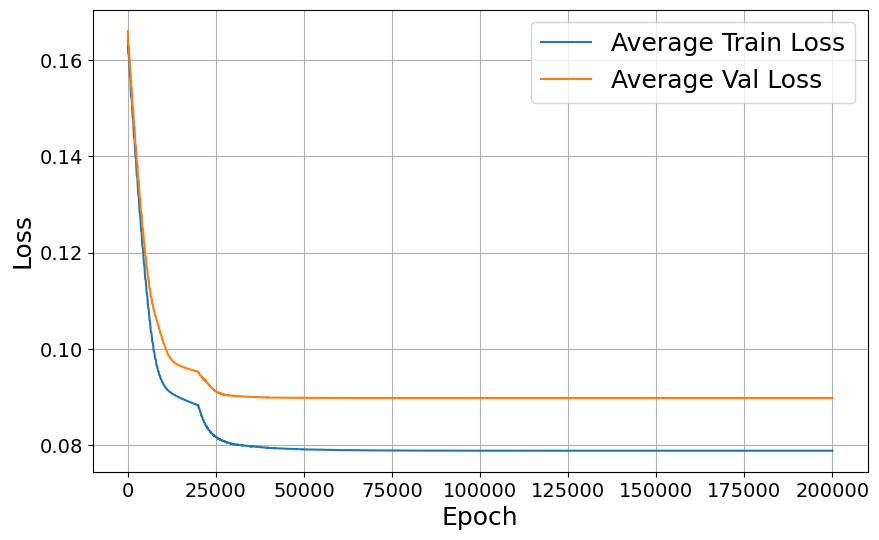}
\caption{Average training and validation losses across 5-fold cross-validation using the neural network model.}
\label{fig:cv-result}
\end{figure}
\end{document}